\DeclareMathOperator{\Int}{Int}
\DeclareMathOperator{\End}{End}
\DeclareMathOperator{\Hom}{Hom}
\DeclareMathOperator{\Tr}{Tr}
\DeclareMathOperator{\Aut}{Aut}
\DeclareMathOperator{\Gal}{Gal}
\newcommand{\D}{\mathsf{D}_4}
\newcommand{\Sym}{\mathfrak{S}}
\newcommand{\Z}{\mathbb{Z}}
\newcommand{\gAut}{\mathbf{Aut}}
\newcommand{\gPGL}{\mathbf{PGL}}
\newcommand{\gmu}{\boldsymbol{\mu}}
\newcommand{\iso}{\stackrel{\sim}{\to}}
\newcommand{\Id}{\operatorname{Id}}
\newcommand{\binv}{\overline{\rule{2.5mm}{0mm}\rule{0mm}{4pt}}}
\newtheorem{proposition}{Proposition}[section]
\newtheorem{theorem}[proposition]{Theorem}
\newtheorem{corollary}[proposition]{Corollary}
\newtheorem{lemma}[proposition]{Lemma}
\theoremstyle{remark}
\newtheorem{remark}[proposition]{Remark}
\newtheorem{examples}[proposition]{Examples}
\title{Triality and algebraic groups of type $^3\mathsf{D}_4$}
\author{Max-Albert Knus \and Jean-Pierre Tignol}
\subjclass[2010]{20G15, 11E72, 17A75}
\keywords{Algebraic group of outer type $^3\mathsf{D}_4$, triality,
  outer automorphism of order $3$, composition algebra, symmetric
  composition, cyclic composition, octonions, Okubo algebra.}
\address{
  Department Mathematik
  ETH Zentrum
  CH-8092 Z\"urich
  Switzerland}
\email{knus@math.ethz.ch}
\address{
  ICTEAM Institute
  Universit\'e catholique de Louvain
  B-1348 Louvain-la-Neuve Belgium}
\email{jean-pierre.tignol@uclouvain.be}
\thanks{The second author is supported in part by
  the~Fonds de la Recherche Scientifique--FNRS under grant
  n$^\circ$~1.5054.12. Both authors acknowledge the hospitality of the
  Fields Institute, Toronto, where this research was initiated. The first author also acknowledges the hospitality of the Universit\'e catholique de Louvain and enlightening conversations with Michel Racine.} 
\begin{document}
\maketitle
\begin{abstract}
We determine which simple algebraic groups of type $^3\D$ over
arbitrary fields of characteristic different from~$2$ admit outer
automorphisms of order~$3$, and classify these automorphisms up to
conjugation. The criterion is formulated in terms of a representation
of the group by automorphisms of a trialitarian algebra: outer
automorphisms of order~$3$ exist if and only if the algebra is the
endomorphism algebra of an induced cyclic composition; their conjugacy
classes are in one-to-one correspondence with isomorphism classes of
symmetric compositions from which the induced cyclic composition stems.
\end{abstract}
\section{Introduction}
Let $G_0$ be an adjoint Chevalley group of type $\D$
over a field $F$. Since the automorphism group of the Dynkin diagram of
type $\D$ is isomorphic to the symmetric group $\Sym_3$, there is a
split exact sequence of algebraic groups
\begin{equation}
  \label{equ:exactsequence}
  \xymatrix@1{
  1 \ar[r]&  G_0 \ar[r]^-{\Int}&  \gAut(G_0) \ar[r]^-{\pi}&
  \Sym_3\ar[r] &1. 
  }
\end{equation}
Thus, $\gAut(G_0) \cong G_0\rtimes \Sym_3$; in particular $G_0$
admits outer automorphisms of order $3$, 
which we call \emph{trialitarian automorphisms}. Adjoint 
algebraic groups of type $\D$ over $F$ are classified by the Galois
cohomology set $H^1(F, G_0 \rtimes \Sym_3) $ and the map induced
by $\pi$ in cohomology
\[
\pi_*\colon H^1(F, G_0 \rtimes \Sym_3) \to H^1(F,\Sym_3)
\]
associates to any group $G$ of type $\D$ the isomorphism class of a
cubic \'etale $F$-algebra $L$. The group $G$ is said to be of type
$^1\D$ if $L$ is split, of type $^2\D$ if $L \cong F \times \Delta$ for some
quadratic separable field extension $\Delta/F$, of type $^3\D$ if
$L$ is a cyclic field extension of $F$ and of type $^6\D$ if
$L$ is a non-cyclic field extension. An easy argument given in
Theorem~\ref{th:extrialauto} below shows that groups of type ${}^2\D$
and ${}^6\D$ do not admit trialitarian automorphisms defined over the
base field. Trialitarian automorphisms of groups of type $^1\D$ were
classified in~\cite{CKT:2012}, and by a different method in
\cite{CEKT:2013}: the adjoint groups of type $^1\D$ that admit
trialitarian automorphisms are the groups of proper projective
similitudes of $3$-fold Pfister quadratic spaces; their trialitarian
automorphisms are shown in \cite[Th.~5.8]{CKT:2012} to be in
one-to-one correspondence with the symmetric composition structures on
the quadratic space. In the present paper, we determine the simple
groups of type $^3\D$ that admit trialitarian automorphisms, and we
classify those automorphisms up to conjugation.

Our main tool is the notion of a trialitarian algebra, as introduced
in \cite[Ch.~X]{BoI}. Since these algebras are only defined in
characteristic different from~$2$, we assume throughout (unless
specifically mentioned) that the characteristic of the base field $F$
is different from~$2$. In view of \cite[Th.~(44.8)]{BoI}, every
adjoint simple group $G$ of type $\D$ can be represented as the
automorphism group of a trialitarian algebra
$T=(E,L,\sigma,\alpha)$. In the datum defining $T$, $L$  
is the cubic \'etale $F$-algebra given by the map $\pi_*$ above, $E$
is a central simple $L$-algebra with orthogonal involution $\sigma$,
known as the \emph{Allen invariant} of $G$ (see \cite{Allison:92}),
and~$\alpha$ is an isomorphism relating $(E,\sigma)$ with its Clifford
algebra $C(E,\sigma)$ (we refer to \cite[\S43]{BoI} for details).
We show in Theorem~\ref{th:extrialauto} that if $G$ admits an outer
automorphism of order~$3$ modulo inner automorphisms, then $L$ is
either split (i.e., isomorphic to $F\times F\times F$), or it is a
cyclic field extension of $F$ (so $G$ is of type $^1\D$ or $^3\D$), and
the Allen invariant $E$ of $G$ is a split central simple
$L$-algebra. This implies that $T$ has the special form $T=\End\Gamma$ for
some cyclic composition $\Gamma$. We further show in
Theorem~\ref{th:trialauto} that if $G$ carries a trialitarian
automorphism, then the cyclic composition $\Gamma$ is \emph{induced},
which means that it is built from some symmetric composition over $F$,
and we establish a one-to-one correspondence between trialitarian
automorphisms of $G$ up to conjugation and isomorphism classes of
symmetric compositions over $F$ from which $\Gamma$ is built.

The notions of symmetric and cyclic compositions are recalled in
\S\ref{sec:cycsym}. Trialitarian algebras are discussed in
\S\ref{sec:trialalg}, which contains the most substantial part of the
argument: we determine the trialitarian algebras that have semilinear
automorphisms of order~$3$ (Theorem~\ref{th:main}) and we classify
these automorphisms up to conjugation
(Theorem~\ref{th:conjclass}). The group-theoretic results follow
easily in~\S\ref{sec:trigroups} by using the correspondence between
groups of type $\D$ and trialitarian algebras.

Notation is generally as in the Book of Involutions \cite{BoI}, which
is our main reference. For an algebraic structure $S$ defined over a
field $F$, we let $\Aut(S)$ denote the group of automorphisms of $S$,
and write $\gAut(S)$ for the corresponding group scheme over $F$.

We gratefully thank Vladimir Chernousov and Alberto Elduque for their
help during the preparation of this paper.

\section{Cyclic and symmetric compositions}
\label{sec:cycsym}

Cyclic compositions were introduced by Springer in his 1963
G\"ottingen lecture notes (\cite{Springer:63}, \cite{SpV}) to get new
descriptions of Albert algebras. We recall their definition from
\cite{SpV}\footnote{A cyclic composition is called a {\it 
    normal twisted composition} \/in \cite{Springer:63} and
  \cite{SpV}.} 
and \cite[\S36.B]{BoI}, restricting to the case of dimension~$8$.

Let $F$ be an arbitrary field (of any characteristic). A \emph{cyclic
  composition} (of dimension~$8$) over $F$ is a $5$-tuple $\Gamma=(V,L,Q,\rho,*)$
consisting of
\begin{itemize}
\item 
a cubic \'etale $F$-algebra $L$;
\item
a free $L$-module $V$ of rank~$8$;
\item
a quadratic form $Q\colon V\to L$ with nondegenerate polar bilinear
form $b_Q$;
\item
an $F$-automorphism $\rho$ of $L$ of order~$3$;
\item
an $F$-bilinear map $*\colon V\times V\to V$ with the following
properties: for all $x$, $y$, $z\in V$ and $\lambda\in L$,
\[
(\lambda x)*y = \rho(\lambda)(x*y),\qquad
x*(y\lambda)=(x*y)\rho^2(\lambda),
\]
\[
Q(x*y)=\rho\bigl(Q(x)\bigr)\cdot \rho^2\bigl(Q(y)\bigr),
\]
\[
b_Q(x*y,z)=\rho\bigl(b_Q(y*z,x)\bigr) = \rho^2\bigl(b_Q(z*x,y)\bigr).
\]
\end{itemize}
These properties imply the following (see \cite[\S36.B]{BoI} or
\cite[Lemma~4.1.3]{SpV}): for all $x$, $y\in V$,
\begin{equation}
  \label{eq:flex}
  (x*y)*x = \rho^2\bigl(Q(x)\bigr) y \quad\text{and}\quad
  x*(y*x) = \rho\bigl(Q(x)\bigr) y.
\end{equation}
Since the cubic \'etale $F$-algebra $L$ has an automorphism of
order~$3$, $L$ is either~a~cyclic cubic field extension of $F$, and
$\rho$ is a generator of the Galois group, or we may identify $L$ with
$F\times F\times F$ and assume $\rho$ permutes the components
cyclically. We will almost exclusively restrict to the case where $L$
is a field; see however Remark~\ref{rem:triple} below.
\medbreak\par
Let $\Gamma'=(V',L',Q',\rho',*')$ be also a cyclic composition over
$F$. An \emph{isotopy}\footnote{The term used in \cite[p.~490]{BoI} is
  \emph{similarity}.} $\Gamma\to\Gamma'$ is defined to be a pair
$(\nu,f)$ where $\nu\colon(L,\rho)\iso(L',\rho')$ is an
isomorphism of $F$-algebras with automorphisms (i.e.,
$\nu\circ\rho=\rho'\circ\nu$) and $f\colon V\iso V'$ is a
$\nu$-semilinear isomorphism for which there exists $\mu\in
L^\times$ such that
\[
Q'\bigl(f(x)\bigr)=\nu\bigl(\rho(\mu)\rho^2(\mu)\cdot
Q(x)\bigr)
\quad\text{and}\quad
f(x)*'f(y)=\nu(\mu)f(x*y)
\]
for $x$, $y\in V$. The scalar $\mu$ is called the \emph{multiplier} of
the isotopy. Isotopies with multiplier~$1$ are
\emph{isomorphisms}. When the map $\nu$ is clear from the context, we
write simply $f$ for the pair $(\nu,f)$, and refer to $f$ as a
\emph{$\nu$-semilinear isotopy}.
\medbreak
\par
Examples of cyclic compositions can be obtained by scalar extension
from symmetric compositions over $F$, as we now show. Recall from
\cite[\S34]{BoI} that a \emph{symmetric composition} (of
dimension~$8$) over $F$ is a triple $\Sigma=(S,n,\star)$ where $(S,n)$
is an $8$-dimensional $F$-quadratic space (with nondegenerate polar
bilinear form~$b_n$) and $\star\colon S\times
S\to S$ is a bilinear map such
that for all $x$, $y$, $z\in S$
\[
n(x\star y)=n(x)n(y) \quad\text{and}\quad b_{n}(x\star y,z) =
b_{n}(x,y\star z).
\]
If $\Sigma'=(S',n',\star')$ is also a symmetric composition over $F$,
an \emph{isotopy} $\Sigma\to\Sigma'$ is a linear map $f\colon S\to S'$
for which there exists $\lambda\in F^\times$ (called the
\emph{multiplier}) such that
\[
n'\bigl(f(x)\bigr) = \lambda^2 n(x) \quad\text{and}\quad
f(x)\star'f(y) = \lambda f(x\star y) \quad\text{for $x$, $y\in S$.}
\]
Note that if $f\colon\Sigma\to\Sigma'$ is an isotopy with multiplier
$\lambda$, then $\lambda^{-1}f\colon\Sigma\to\Sigma'$ is an
isomorphism. Thus, symmetric compositions are isotopic if and only if
they are isomorphic. For an explicit example of a symmetric composition, take a Cayley (octonion) algebra $(C,\cdot)$
with norm $n$ and conjugation map $\binv$. Letting
$x\star y=\overline{x}\cdot \overline{y}$ for $x$, $y\in C$ yields a
symmetric composition $\widetilde C=(C,n,\star)$, which is called a
\emph{para-Cayley composition}
(see \cite[\S34.A]{BoI}).

Given a symmetric composition $\Sigma=(S,n,\star)$ and a cubic \'etale
$F$-algebra $L$ with an automorphism $\rho$ of order~$3$, we define a
cyclic composition $\Sigma\otimes(L,\rho)$ as follows:
\[
\Sigma\otimes (L,\rho) = (S\otimes_FL, L, n_L, \rho, *)
\]
where $n_L$ is the scalar extension of $n$ to $L$ and $*$ is defined
by extending $\star$ linearly to $S\otimes_FL$ and then setting
\[
x*y=(\Id_{S}\otimes\rho)(x) \star (\Id_{S}\otimes\rho^2)(y)
\qquad\text{for $x$, $y\in S\otimes_FL$.}
\]
(See \cite[(36.11)]{BoI}.) Clearly, every isotopy $f\colon
\Sigma\to\Sigma'$ of symmetric compositions extends to an isotopy of
cyclic compositions $(\Id_L,f)\colon \Sigma\otimes(L,\rho)\to
\Sigma'\otimes(L,\rho)$. Observe for later use that the map
$\widehat\rho=\Id_S\otimes\rho\in\End_F(S\otimes_FL)$ defines a
$\rho$-semilinear automorphism 
\begin{equation}
  \label{eq:hatrho}
  \widehat\rho\colon \Sigma\otimes(L,\rho) \iso \Sigma\otimes(L,\rho)
\end{equation}
such that $\widehat\rho^3=\Id$.

We call  a cyclic composition that is isotopic to
$\Sigma\otimes(L,\rho)$ for some symmetric composition $\Sigma$ 
\emph{induced}. Cyclic compositions induced from para-Cayley 
symmetric compositions are called \emph{reduced} in~\cite{SpV}. 

\begin{remark} \label{rem:types} Induced cyclic compositions are not
  necessarily reduced. This can be shown by using the following
  cohomological argument.  We assume for simplicity that the field $F$
  contains a primitive cube root of unity $\omega$.  There is~a
  cohomological invariant $g_3(\Gamma) \in H^3(F, \mathbb Z/3\mathbb
  Z)$ attached to any cyclic composition~$\Gamma$. The cyclic
  composition~$\Gamma$ is reduced if and only if $g_3(\Gamma)=0$ (we
  refer to \cite[\S 8.3]{SpV} or \cite[\S 40]{BoI} for details).  We
  construct an induced cyclic composition $\Gamma$ with
  $g_3(\Gamma)\neq0$.  Let $a$, $b \in F^\times$ and let $A(a,b)$ be
  the $F$-algebra with generators $\alpha$, $\beta$ and relations
  $\alpha^3 =a$, $\beta^3 =b$, $\beta\alpha = \omega \alpha\beta$. The
  algebra $A(a,b)$ is central simple of dimension $9$ and the space
  $A^0$ of elements of $A(a,b)$ of reduced trace zero admits the
  structure of a symmetric composition $\Sigma(a,b)=(A^0,n, \star)$
  (see \cite[(34.19)]{BoI}). Such symmetric compositions are called
  \emph{Okubo symmetric compositions}.  From the Elduque--Myung
  classification of symmetric
  compositions~\cite[p.~2487]{ElduqueMyung:93} (see also
  \cite[(34.37)]{BoI}), it follows that symmetric compositions are
  either para-Cayley or Okubo. Let $L=F(\gamma)$ with $\gamma^3 =c \in
  F^\times$ be a cubic cyclic field extension of $F$, and let $\rho$
  be the $F$-automorphism of $L$ such that $\gamma \mapsto
  \omega\gamma$. We may then consider the induced cyclic composition
  $\Gamma(a,b,c)=\Sigma(a,b)\otimes(L,\rho)$.  Its cohomological
  invariant $g_3\bigl(\Gamma(a,b,c)\bigr)$ can be computed by the
  construction in \cite[\S8.3]{SpV}: Using $\omega$, we identify the
  group $\mu_3$ of cube roots of unity in $F$ with $\Z/3\Z$, and for
  any $u\in F^\times$ we write $[u]$ for the cohomology class in
  $H^1(F,\Z/3\Z)$ corresponding to the cube class $uF^{\times3}$ under
  the isomorphism $F^\times/F^{\times3}\cong H^1(F,\mu_3)$ arising
  from the Kummer exact sequence (see \cite[p.~413]{BoI}). Then
  $g_3\bigl(\Gamma(a,b,c)\bigr)$ is the cup-product $[a]\cup[b]\cup[c]
  \in H^3(F, \mathbb Z/3\mathbb Z)$.  Thus any cyclic composition
  $\Gamma(a,b,c)$ with $[a]\cup[b]\cup[c] \neq 0$ is induced but not
  reduced.

  Another cohomological argument can be used to show that there exist
  cyclic compositions that are not induced. We still assume that $F$
  contains a primitive cube root of unity $\omega$. There is a further
  cohomological invariant of cyclic compositions $f_3(\Gamma) \in
  H^3(F, \mathbb Z/2\mathbb Z)$ which is zero for any cyclic composition
  induced by an Okubo symmetric composition\footnote{The fact that $F$
    contains a primitive cubic root of unity is relevant for this
    claim.} and is given by the class in $H^3(F, \mathbb Z/2\mathbb
  Z)$ of the $3$-fold Pfister form which is the norm of $\widetilde C$
  if 
  $\Gamma$ is induced from the para-Cayley $\widetilde C$ (see for
  example \cite[\S 40]{BoI}).  Thus a cyclic composition $\Gamma$ with
  $f_3(\Gamma) \neq 0$ and $g_3(\Gamma) \neq 0$ is not induced. Such
  examples can be given with the help of the Tits process used for
  constructing Albert algebras (see \cite[\S 39 and
  \S40]{BoI}). However, for example, cyclic compositions over finite
  fields, $p$-adic fields or algebraic number fields are reduced, see
  \cite[p.~108]{SpV}.
\end{remark}

\begin{examples} \label{examples}
\quad(i) Let $F=\mathbb{F}_q$ be the field with $q$ elements,
    where $q$ is odd and $q\equiv1\bmod3$. Thus~$F$ contains a
    primitive cube root of unity and we are in the situation of
    Remark~\ref{rem:types}. Let $L=\mathbb{F}_{q^3}$ be the (unique,
    cyclic) cubic field extension of $F$, and let $\rho$ be the
    Frobenius automorphism of $L/F$. Because $H^3(F,\Z/3\Z)=0$, every
    cyclic composition over $F$ is reduced; moreover every $3$-fold
    Pfister form is hyperbolic, hence every Cayley algebra is
    split. Therefore, up to isomorphism there is a unique cyclic
    composition over $F$ with cubic algebra $(L,\rho)$, namely
    $\Gamma=\widetilde C\otimes(L,\rho)$ where $\widetilde C$ is the
    split 
    para-Cayley symmetric composition. If $\Sigma$ denotes the Okubo
    symmetric composition on $3\times3$ matrices of trace zero with
    entries in $F$, we thus have $\Gamma\cong\Sigma\otimes(L,\rho)$,
    which 
    means that~$\Gamma$ is also induced by $\Sigma$. By the
    Elduque--Myung classification of symmetric compositions, every
    symmetric composition over $F$ is isomorphic either to the Okubo
    composition $\Sigma$ or to the split para-Cayley composition
    $\widetilde C$. Therefore, $\Gamma$ is induced by exactly two
    symmetric compositions over $F$ up to isomorphism.
\smallbreak\par\noindent
(ii) Assume that $F$ contains a primitive cube root of unity
    and that $F$ carries an anisotropic $3$-fold Pfister form $n$. Let
    $C$ 
    be the non-split Cayley algebra with norm~$n$ and let $\widetilde
    C$ be the associated para-Cayley algebra. For any cubic cyclic
    field 
    extension $(L,\rho)$ the norm $n_L$ of the cyclic composition
    $\widetilde C \otimes (L,\rho)$ is anisotropic. Thus it follows
    from the Elduque--Myung classification that any symmetric
    composition $\Sigma$ such that $\Sigma \otimes (L,\rho)$ is
    isotopic to $\widetilde C \otimes (L,\rho)$ must be isomorphic to
    $\widetilde C$.
\smallbreak\par\noindent
(iii) Finally, we observe that the cyclic compositions of
    type $\Gamma(a,b,c)$, described in Remark~\ref{rem:types}, have
    invariant $g_3$ equal to zero if $c=a$. Since the $f_3$-invariant
    is also zero, they are all isotopic to the cyclic composition
    induced by the split para-Cayley algebra. Thus we can get (over
    suitable fields) examples of many mutually non-isomorphic
    symmetric compositions $\Sigma(a,b)$ that induce isomorphic cyclic
    compositions $\Gamma(a,b,c)$.
\end{examples}

Of course, besides this construction of cyclic compositions by
induction from symmetric compositions, we can also extend scalars of a
cyclic composition: if $\Gamma=(V,L,Q,\rho,*)$ is a cyclic composition
over $F$ and $K$ is any field extension of $F$, then
$\Gamma_K=(V\otimes_FK, L\otimes_FK, Q_K, \rho\otimes\Id_K,*_K)$ is a
cyclic composition over $K$.

\begin{remark}
  \label{rem:triple}
  Let $\Gamma=(V,L,Q,\rho,*)$ be an arbitrary cyclic composition over
  $F$ with $L$ a field. Write $\theta$ for $\rho^2$. We have an
  isomorphism of $L$-algebras
  \[
  \nu\colon L\otimes_FL\iso L\times L\times L \quad\text{given
    by}\quad \ell_1\otimes\ell_2\mapsto
  (\ell_1\ell_2,\rho(\ell_1)\ell_2, \theta(\ell_1)\ell_2).
  \]
  Therefore, the extended cyclic composition $\Gamma_L$ over $L$ has a split
  cubic \'etale algebra. To give an explicit description of
  $\Gamma_L$, note first that under the isomorphism 
  $\nu$ the automorphism $\rho\otimes\Id_L$ is identified with the
  map $\widetilde\rho$ defined by $\widetilde\rho(\ell_1,\ell_2,\ell_3) =
  (\ell_2,\ell_3,\ell_1)$. Consider the twisted $L$-vector spaces
  $^\rho V$, $^\theta V$ defined by
  \[
  {}^\rho V=\{{}^\rho x\mid x\in V\},\qquad {}^\theta V =\{{}^\theta
  x\mid x\in V\}
  \]
  with the operations
  \[
  {}^\rho(x+y) = {}^\rho x + {}^\rho y,\; {}^\theta(x+y) =
  {}^\theta x + {}^\theta y,\;\text{and}\; {}^\rho(x\lambda) =
  ({}^\rho x)\rho(\lambda),\; {}^\theta(x\lambda) = ({}^\theta
  x)\theta(\lambda)
  \]
  for $x$, $y\in V$ and $\lambda\in L$. Define quadratic forms
  ${}^\rho Q\colon {}^\rho V\to L$ and ${}^\theta Q\colon {}^\theta
  V\to L$ by
  \[
  {}^\rho Q({}^\rho x)=\rho\bigl(Q(x)\bigr) \quad\text{and}\quad
  {}^\theta Q({}^\theta x)= \theta\bigl(Q(x)\bigr)\quad\text{for $x\in V$},
  \]
  and $L$-bilinear maps
  \[
  *_{\Id}\colon {}^\rho V\times {}^\theta V\to V,\quad
  *_\rho\colon {}^\theta V\times V\to {}^\rho V,\quad
  *_\theta\colon V\times{}^\rho V\to {}^\theta V
  \]
  by
  \[
  {}^\rho x *_{\Id} {}^\theta y = x * y,\quad
  {}^\theta x *_\rho y = {}^\rho(x*y),\quad
  x*_\theta{}^\rho y = {}^\theta(x*y)\quad\text{for $x$, $y\in V$.}
  \]
  We may then consider the quadratic form
  \[
  Q\times{}^\rho Q\times{}^\theta Q\colon V\times{}^\rho
  V\times{}^\theta V \to L\times L\times L
  \]
  and the product $\diamond\colon(V\times{}^\rho V\times{}^\theta V)
  \times (V\times{}^\rho V\times{}^\theta V) \to (V\times{}^\rho
  V\times{}^\theta V)$ defined by
  \[
  (x,{}^\rho x,{}^\theta x) \diamond (y,{}^\rho y,{}^\theta y) =
  ({}^\rho x *_{\Id}{}^\theta y, {}^\theta x*_\rho y,\ x*_\theta{}^\rho
  y).
  \]
  Straightforward calculations show that the $F$-vector space
  isomorphism $f\colon V\otimes_FL\to V\times{}^\rho V\times{}^\theta
  V$ given by
  \[
  f(x\otimes\ell) = (x\ell, ({}^\rho x)\ell, ({}^\theta x)\ell)
  \qquad\text{for $x\in V$ and $\ell\in L$}
  \]
  defines with $\nu$ an isomorphism of cyclic compositions
  \[
  \Gamma_L\iso(V\times{}^\rho V\times{}^\theta V,\ L\times
  L\times L,\ 
  Q\times{}^\rho Q\times{}^\theta Q,\ \widetilde\rho,\ \diamond).
  \]
\end{remark}

\section{Trialitarian algebras}
\label{sec:trialalg}

In this section, we assume that the characteristic of the base field
$F$ is different from~$2$. Trialitarian algebras are defined in
\cite[\S43]{BoI} as $4$-tuples $T=(E,L,\sigma,\alpha)$ where
$L$ is a cubic \'etale $F$-algebra, $(E,\sigma)$ is a central simple
$L$-algebra of 
degree~$8$ with an orthogonal involution, and $\alpha$ is an
isomorphism from the Clifford algebra $C(E,\sigma)$ to a certain
twisted scalar extension of $E$. We just recall in detail the special
case of trialitarian 
algebras of the form $\End\Gamma$ for~$\Gamma$ a cyclic composition,
because this is the main case for the purposes of this paper.
\medbreak
\par
Let $\Gamma=(V,L,Q,\rho,*)$ be a cyclic composition (of dimension~$8$)
over $F$, with~$L$~a field, and let $\theta=\rho^2$. Let also
$\sigma_Q$ denote the 
orthogonal involution on $\End_LV$ adjoint to $Q$. We will use the
product $*$ to see that the Clifford algebra $C(V,Q)$ is split and the
even Clifford algebra $C_0(V,Q)$ decomposes 
into a direct product of two split central simple $L$-algebras of
degree~$8$. Using the notation of
Remark~\ref{rem:triple}, to any $x\in V$ we associate $L$-linear maps
\[
\ell_x\colon{}^\rho V\to{}^\theta V\quad\text{and}\quad
r_x\colon{}^\theta V\to{}^\rho V
\]
defined by
\[
\ell_x({}^\rho y) = x*_\theta{}^\rho y = {}^\theta(x*y)
\quad\text{and}\quad
r_x({}^\theta z) = {}^\theta z*_\rho x = {}^\rho(z*x)
\]
for $y$, $z\in V$. From \eqref{eq:flex} it follows that for $x\in
V$ the $L$-linear map
\[
\alpha_*(x)=
\Bigl(
\begin{smallmatrix}
  0&r_x\\ \ell_x&0
\end{smallmatrix}
\Bigr)\colon {}^\rho V\oplus{}^\theta V \to {}^\rho V\oplus{}^\theta V
\quad\text{given by}\quad
({}^\rho y,{}^\theta z)\mapsto \bigl(r_x({}^\theta z), \ell_x({}^\rho
y)\bigr)
\]
satisfies $\alpha_*(x)^2=Q(x)\Id$. Therefore, there is an induced
$L$-algebra homomorphism
\begin{equation}
  \label{eq:defalpha}
  \alpha_*\colon C(V,Q)\to \End_L({}^\rho V\oplus{}^\theta V).
\end{equation}
This homomorphism is injective because $C(V,Q)$ is a simple algebra,
hence it is an isomorphism by dimension count. It restricts to an
$L$-algebra isomorphism
\[
\alpha_{*0}\colon C_0(V,Q)\iso \End_L({}^\rho V) \times \End_L({}^\theta
V),
\]
see \cite[(36.16)]{BoI}. Note that we may identify $\End_L({}^\rho V)$
with the twisted algebra ${}^\rho(\End_LV)$ (where multiplication is
defined by ${}^\rho f_1\cdot{}^\rho f_2={}^\rho(f_1\circ f_2)$) as
follows: for $f\in\End_LV$, we identify ${}^\rho f$ with the map
${}^\rho V\to{}^\rho V$ such that ${}^\rho f({}^\rho x)={}^\rho(f(x))$
for $x\in V$. On the other hand, let $\sigma_Q$ be the orthogonal
involution on $\End_LV$ adjoint to $Q$. The algebra $C_0(V,Q)$ is
canonically isomorphic to the Clifford algebra $C(\End_LV,\sigma_Q)$
(see \cite[(8.8)]{BoI}), hence it depends only on $\End_LV$ and
$\sigma_Q$. We may regard $\alpha_{*0}$ as an isomorphism of
$L$-algebras 
\[
\alpha_{*0}\colon C(\End_LV,\sigma_Q) \iso {}^\rho(\End_LV) \times
{}^\theta(\End_LV).
\]
Thus, $\alpha_{*0}$ depends only on $\End_LV$ and $\sigma_Q$. The
trialitarian algebra $\End\Gamma$ is the $4$-tuple
\[
\End\Gamma=(\End_LV,L,\sigma_Q,\alpha_{*0}).
\]
An \emph{isomorphism} of trialitarian algebras $\End\Gamma\iso
\End\Gamma'$, for $\Gamma'=(V',L',Q',\rho',*')$ a cyclic composition,
is defined to be an isomorphism of $F$-algebras with involution $\varphi\colon
(\End_LV,\sigma_Q) \iso (\End_{L'}V',\sigma_{Q'})$ subject to the
following conditions:
\begin{enumerate}
\item[(i)]
the restriction of $\varphi$ to the center of $\End_LV$ is an isomorphism
$\varphi\rvert_L\colon(L,\rho)\iso(L',\rho')$, and
\item[(ii)] 
the following diagram (where $\theta'={\rho'}^2$) commutes:
\[
\xymatrix{
C(\End_LV,\sigma_Q) \ar[r]^-{\alpha_{*0}} \ar[d]_{C(\varphi)} & 
{}^\rho(\End_LV) \times {}^\theta(\End_LV) \ar[d]^{{}^\rho\varphi\times
  {}^\theta\varphi} \\
C(\End_{L'}V',\sigma_{Q'}) \ar[r]^-{\alpha_{*'0}} &
{}^{\rho'}(\End_{L'}V') \times {}^{\theta'}(\End_{L'}V')
}
\]
\end{enumerate}
For example, it is straightforward to check that every isotopy
$(\nu,f)\colon\Gamma\to\Gamma'$ induces an isomorphism $\End\Gamma\to
\End\Gamma'$ mapping $g\in\End_LV$ to $f\circ g\circ
f^{-1}\in\End_{L'}V'$. As part of the proof of the main theorem below,
we show that every isomorphism $\End\Gamma\iso\End\Gamma'$ is induced
by an isotopy; see Lemma~\ref{lem:new}. (A cohomological proof that
the trialitarian algebras $\End\Gamma$, $\End\Gamma'$ are isomorphic
if and only if the cyclic compositions $\Gamma$, $\Gamma'$ are
isotopic is given in \cite[(44.16)]{BoI}.)

\medbreak
\par

 We show that the trialitarian algebra $\End\Gamma$ admits 
 a $\rho$-semilinear automorphism of order $3$ if and only if $\Gamma$ is 
 reduced. More precisely:
 
\begin{theorem}
  \label{th:main}
  Let $\Gamma=(V,L,Q,\rho,*)$ be a cyclic composition over $F$, with
 $L$~a field.
  \begin{enumerate}
  \item[(i)]
 If $\Sigma$
  is a symmetric composition over $F$ and
  $f\colon\Sigma\otimes(L,\rho)\to\Gamma$ is an $L$-linear isotopy, then
  the automorphism
  $\tau_{(\Sigma,f)}=\Int(f\circ\widehat\rho\circ
  f^{-1})\rvert_{\End_LV}$ of  $\End\Gamma$, where 
  $\widehat\rho$ is defined in \eqref{eq:hatrho},  
  is such that $\tau_{(\Sigma,f)}^3=\Id$ and
  $\tau_{(\Sigma,f)}\lvert_L=\rho$. The automorphism $\tau_{(\Sigma,f)}$ 
  only depends, up to conjugation in $\Aut_F(\End\Gamma)$,
  on the isomorphism class of $\Sigma$.
  \item[(ii)]
  If $\End\Gamma$ carries an $F$-automorphism $\tau$
  such that $\tau\rvert_L=\rho$ and $\tau^3=\Id$, then $\Gamma$ is
  reduced. More precisely, there exists a symmetric composition
  $\Sigma$ over $F$ and an $L$-linear isotopy $f\colon
  \Sigma\otimes(L,\rho)\to\Gamma$ such that
  $\tau= \tau_{(\Sigma,f)}$. 
  \end{enumerate}
\end{theorem}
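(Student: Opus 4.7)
For part~(i), the verification is direct. The map $h=f\circ\widehat\rho\circ f^{-1}\colon V\to V$ is $F$-linear and $\rho$-semilinear (since $f$ is $L$-linear and $\widehat\rho$ is $\rho$-semilinear), so $\Int(h)$ restricts to an $F$-algebra automorphism $\tau_{(\Sigma,f)}$ of $\End_LV$ whose restriction to $L$ is~$\rho$; the identity $\widehat\rho^3=\Id$ gives $h^3=\Id$, whence $\tau_{(\Sigma,f)}^3=\Id$. Preservation of $\sigma_Q$ follows by combining the $Q$-part of the isotopy relation for $f$ with $Q(\widehat\rho(x))=\rho(Q(x))$, and the analogous computation for the product~$*$ yields commutativity with $\alpha_{*0}$, making $\tau_{(\Sigma,f)}$ an automorphism of the trialitarian algebra $\End\Gamma$. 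For the conjugacy-class statement: if $\xi\colon\Sigma\iso\Sigma'$ is an isomorphism then $\xi\otimes\Id_L$ commutes with $\widehat\rho$, so $\tau_{(\Sigma',f\circ(\xi\otimes\Id_L)^{-1})}=\tau_{(\Sigma,f)}$; and any two isotopies $f_1,f_2\colon\Sigma\otimes(L,\rho)\to\Gamma$ differ by an $L$-linear cyclic-composition automorphism of $\Gamma$, producing conjugate $\tau$'s in $\Aut_F(\End\Gamma)$.

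For part~(ii), the plan is to lift $\tau$ to a $\rho$-semilinear automorphism of $\Gamma$ (as a cyclic composition) of order~$3$, and then to recover $\Sigma$ by Galois descent. Apply Lemma~\ref{lem:new} (proved below): every $F$-automorphism of $\End\Gamma$ acting as $\rho$ on the center is induced by a $\rho$-semilinear isotopy $g\colon\Gamma\to\Gamma$, unique up to $L^\times$-scaling; let $\mu\in L^\times$ denote the multiplier of $g$. From $\tau^3=\Id$ we get $g^3=c\cdot\Id_V$ with $c$ central, and $\rho$-semilinearity forces $c\in F^\times$. Iterating the product relation $g(x)*g(y)=\rho(\mu)g(x*y)$ three times and comparing with $g^3(x)*g^3(y)=c^2(x*y)$ produces the key identity $c=N_{L/F}(\mu)$.

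The critical normalization step is to find $\lambda\in L^\times$ making $\widetilde g:=\lambda g$ simultaneously an automorphism of $\Gamma$ (multiplier~$1$) and of order~$3$. Under $g\mapsto\lambda g$, the multiplier transforms as $\mu\mapsto\mu\cdot\lambda\rho(\lambda)/\rho^2(\lambda)$ and the cube as $c\mapsto N_{L/F}(\lambda)c$; since $c=N_{L/F}(\mu)$, these two normalization conditions become equivalent, reducing to solving $\rho^2(\lambda)=\mu\lambda\rho(\lambda)$ in $L^\times$. The existence of $\lambda$ is a Hilbert~90--flavoured cocycle problem for the cyclic Galois group $\Gal(L/F)$; this is the main technical hurdle of the proof.

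With $\widetilde g$ in hand, set $S:=V^{\widetilde g}$. Galois descent for the cyclic extension $L/F$ gives an $L$-linear isomorphism $\iota\colon S\otimes_FL\iso V$ extending the inclusion, with $\dim_FS=8$. Multiplier~$1$ yields $Q(\widetilde g(x))=\rho(Q(x))$, so $Q(S)\subseteq L^\rho=F$, defining an $F$-valued form $n:=Q\rvert_S$; and $\widetilde g(a*b)=\widetilde g(a)*\widetilde g(b)=a*b$ for $a,b\in S$ forces $a*b\in S$, defining $\star:=*\rvert_{S\times S}$. On $S$ the $\rho$-twists in the cyclic-composition identities trivialize, yielding the symmetric-composition identities for $\Sigma:=(S,n,\star)$. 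Finally, $\iota\colon\Sigma\otimes(L,\rho)\iso\Gamma$ intertwines $\widehat\rho$ and $\widetilde g$, so $\tau_{(\Sigma,\iota)}=\Int(\widetilde g)\rvert_{\End_LV}=\Int(g)\rvert_{\End_LV}=\tau$, completing the argument.
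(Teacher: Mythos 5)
The proof of part~(i) follows the paper closely and is fine. Your observation that $c=N_{L/F}(\mu)$ is a nice shortcut: it renders the norm condition $\xi=N_{L/F}(\eta)$ automatic, bypassing the paper's argument that the crossed product $(L,\rho,\xi)$ is split because $\End_FV\cong(L,\rho,\xi)\otimes_F(\End_LV)^\tau$ with factors of coprime degrees.

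However, there is a genuine gap in part~(ii) at the step you flag as ``the main technical hurdle.'' You want to rescale $g$ by $\lambda\in L^\times$ so that $\widetilde g=\lambda g$ has $\widetilde g^3=\Id$ and multiplier~$1$, and you reduce this to solving $\rho^2(\lambda)=\mu\lambda\rho(\lambda)$. This is \emph{not} a Hilbert~90--type problem and is not solvable in general. Taking norms forces $N_{L/F}(\lambda)=N_{L/F}(\mu)^{-1}$; substituting $\lambda\rho(\lambda)=N_{L/F}(\lambda)/\theta(\lambda)$ back into the equation, one finds it is equivalent to
\[
\theta(\lambda)^2 \;=\; \mu\,N_{L/F}(\mu)^{-1},
\]
i.e.\ $\mu$ must be a square in $L^\times$ up to a factor in $F^\times$. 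After the normalization $g^3=\Id$, the multiplier $\mu$ has $N_{L/F}(\mu)=1$, so $\mu$ is an arbitrary norm-one element (by Hilbert~90), and norm-one elements need not lie in $(L^\times)^2\cdot F^\times$. So there is no $\lambda$ in general, and no $\rho$-semilinear \emph{automorphism} of $\Gamma$ of order~$3$ exists; this is precisely why Theorem~\ref{th:main}(ii) only asserts an \emph{isotopy} $f\colon\Sigma\otimes(L,\rho)\to\Gamma$, not an isomorphism. Your final sentence, which claims an isomorphism $\iota\colon\Sigma\otimes(L,\rho)\iso\Gamma$, proves more than the theorem says and more than is true.

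The paper avoids this obstruction by leaving $t$ alone and instead replacing $\Gamma$ by an isotopic cyclic composition $\Gamma'=(V,L,Q',\rho,*')$: using Hilbert~90 to write $\mu=\zeta/\theta(\zeta)$, one sets $Q'=\rho(\zeta)\theta(\zeta)Q$ and $x*'y=\zeta(x*y)$. Then $\Id_V\colon\Gamma\to\Gamma'$ is an isotopy with multiplier $\zeta$, and $t$ is a genuine $\rho$-semilinear automorphism of $\Gamma'$ with $t^3=\Id$. Galois descent applied to $\Gamma'$ (not $\Gamma$) produces $\Sigma$, and composing the resulting isomorphism $\Sigma\otimes(L,\rho)\iso\Gamma'$ with the isotopy $\Gamma'\to\Gamma$ yields the required isotopy $f$. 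Replacing your rescaling of $g$ with this change of target is exactly what is needed to close the gap.
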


\begin{proof}
(i) It is clear that   $\tau_{(\Sigma,f)}^3=\Id$ and
  $\tau_{(\Sigma,f)}\lvert_L=\rho$. For the last claim,  note that
  if $g\colon \Sigma\otimes(L,\rho)\to\Gamma$ is another $L$-linear
  isotopy, then $f\circ g^{-1}$ is an isotopy of $\Gamma$,
  hence $\Int(f\circ g^{-1})$ is an automorphism of $\End\Gamma$, and
  \[
  \tau_{(\Sigma,f)} = \Int(f\circ g^{-1})\circ\tau_{(\Sigma,g)} \circ
  \Int(f\circ g^{-1})^{-1}.
  \]
    
The proof of claim (ii) relies on three lemmas. Until the end of
this section, we fix a cyclic composition $\Gamma=(V,L,Q,\rho,*)$,
with $L$ a field. We
start with some general observations on $\rho$-semilinear
automorphisms of $\End_LV$. For this, we
consider the inclusions
\[
L\hookrightarrow \End_LV\hookrightarrow \End_FV.
\]
The field $L$ is the center of $\End_LV$, hence every automorphism of
$\End_LV$ restricts to an automorphism of $L$.

\begin{lemma}
  \label{lem:a}
  Let $\nu\in\{\Id_L,\rho,\theta\}$ be an arbitrary element in the
  Galois group $\Gal(L/F)$.
  For every $F$-linear automorphism $\varphi$ of $\End_LV$ such that
  $\varphi\rvert_L=\nu$, there exists an invertible transformation
  $u\in\End_FV$ such that $\varphi(f)=u\circ f\circ u^{-1}$ for all
  $f\in\End_LV$. The map $u$ is uniquely determined up to a
  factor in $L^\times$; it is $\nu$-semilinear, i.e., $u(x\lambda) =
  u(x)\nu(\lambda)$ for all $x\in V$ and $\lambda\in L$. Moreover, if
  $\varphi\circ\sigma_Q = \sigma_Q\circ\varphi$, then there exists
  $\mu\in L^\times$ such that
  \[
  Q\bigl(u(x)\bigr) = \nu\bigl(\mu\cdot Q(x)\bigr) \qquad\text{for
    all $x\in V$.}
  \]
\end{lemma}

\begin{proof}
  The existence of $u$ is a consequence of the Skolem--Noether
  theorem, since $\End_LV$ is a simple subalgebra of the simple algebra
  $\End_FV$: the automorphism $\varphi$ extends to an inner automorphism
  $\Int(u)$ of $\End_FV$ for some invertible $u\in\End_FV$. Uniqueness
  of $u$ up to a factor in $L^\times$ is clear because~$L$ is the
  centralizer of $\End_LV$ in $\End_FV$, and the $\nu$-semilinearity
  of $u$ follows from the equation $\varphi(f)=u\circ f\circ u^{-1}$
  applied with $f$ the scalar multiplication by an element in $L$.

  Now, suppose $\varphi$ commutes with $\sigma_Q$, hence for all $f\in \End_LV$
  \begin{equation}
    \label{eq:1}
    u\circ\sigma_Q(f)\circ u^{-1} = \sigma_Q(u\circ f\circ u^{-1}).
  \end{equation}
  Let $\Tr_*(Q)$ denote the transfer of $Q$ along the trace map
  $\Tr_{L/F}$, so $\Tr_*(Q)\colon V\to F$ is the quadratic form
  defined by $\Tr_*(Q)(x)=\Tr_{L/F}\bigl(Q(x)\bigr)$. The adjoint
  involution $\sigma_{\Tr_*(Q)}$ coincides on $\End_LV$ with
  $\sigma_Q$, hence from~\eqref{eq:1} it follows that
  $\sigma_{\Tr_*(Q)}(u)u$ centralizes $\End_LV$. Therefore,
  $\sigma_{\Tr_*(Q)}(u)u=\mu$ for some $\mu\in L^\times$. We then have
  $b_{\Tr_*(Q)}\bigl(u(x),u(y)\bigr) = b_{\Tr_*(Q)}(x,y\mu)$ for all
  $x$, $y\in V$, which means that
  \begin{equation}
    \label{eq:2}
    \Tr_{L/F}\bigl(b_Q(u(x),u(y))\bigr) = \Tr_{L/F}\bigl(\mu b_Q(x,y)\bigr).
  \end{equation}
  Now, observe that since $u$ is $\nu$-semilinear, the map $c\colon
  V\times V\to L$ defined by
  $c(x,y)=\nu^{-1}\bigl(b_{Q}(u(x),u(y))\bigr)$ is
  $L$-bilinear. From~\eqref{eq:2}, it follows that $c-\mu b_{Q}$ is~a
  bilinear map on $V$ that takes its values in the kernel of the trace
  map. But the value domain of an $L$-bilinear form is either $L$ or
  $\{0\}$, and the trace map is not the zero map. Therefore,
  $c-\mu b_{Q}=0$, which means that
  \[
  \nu^{-1}\bigl(b_{Q}(u(x),u(y))\bigr) = \mu b_{Q}(x,y) \qquad\text{for
    all $x$, $y\in V$,}
  \]
  hence $Q\bigl(u(x)\bigr) = \nu\bigl(\mu\cdot Q(x)\bigr)$ for all $x\in V$.
\end{proof}

Note that the arguments in the preceding proof apply to any quadratic
space $(V,Q)$ over $L$. By contrast, the next lemma uses the full
cyclic composition structure: Let again
$\nu\in\{\Id_L,\rho,\theta\}$. Given an invertible element
$u\in\End_FV$ and $\mu\in L^\times$ such 
that for all $x\in V$ and $\lambda\in L$
\[
u(x\lambda)=u(x)\nu(\lambda) \quad\text{and}\quad Q\bigl(u(x)\bigr)
= \nu\bigl(\mu\cdot Q(x)\bigr),
\]
we define an $L$-linear map $\beta_u\colon{}^\nu V\to \End_L({}^\rho
V \oplus{}^\theta V)$ by 
\[
\beta_u({}^\nu x)=
\begin{pmatrix}
  0&\nu(\mu)^{-1}r_{u(x)}\\
  \ell_{u(x)}&0
\end{pmatrix}
\in\End_L({}^\rho V\oplus {}^\theta V)\quad\text{for $x\in V$.}
\]
Then from~\eqref{eq:flex} we get
$\beta_u(x)^2=\nu\bigl(Q(x)\bigr)={}^\nu Q({}^\nu x)$. Therefore, the
map $\beta_u$ extends to an $L$-algebra homomorphism
\[
\beta_u\colon C({}^\nu V,{}^\nu Q)\to \End_L({}^\rho
V\oplus{}^\theta V).
\]
Just like $\alpha_*$ in~\eqref{eq:defalpha}, the homomorphism
$\beta_u$ is an isomorphism. We 
also have an isomorphism of $F$-algebras $C({}^\nu\cdot)\colon
C(V,Q)\to C({}^\nu V,{}^\nu Q)$ induced by the $F$-linear map
$x\mapsto{}^\nu x$ for $x\in V$, so we may consider the
$F$-automorphism $\psi_u$ of $\End_L({}^\rho V\oplus{}^\theta V)$
that makes the following diagram commute:
\begin{equation}
  \label{eq:defphiu}
  \begin{split}
  \xymatrix{
  C(V,Q)\ar[r]^-{\alpha_*}\ar[d]_{C({}^\nu\cdot)} & \End_L({}^\rho
  V\oplus{}^\theta V) \ar[d]^{\psi_u}\\
  C({}^\nu V,{}^\nu Q)\ar[r]^-{\beta_u}&\End_L({}^\rho
  V\oplus{}^\theta V)
  }
  \end{split}
\end{equation}

\begin{lemma}
  \label{lem:phi}
  The $F$-algebra automorphism $\psi_u$ restricts to an $F$-algebra
  automorphism $\psi_{u0}$ of $\End_L({}^\rho V)\times
  \End_L({}^\theta V)$. The restriction of $\psi_{u0}$ to the
  center $L\times L$ is either $\nu\times\nu$ or $(\nu\times
  \nu)\circ\varepsilon$ where $\varepsilon$ is the switch map
  $(\ell_1,\ell_2)\mapsto(\ell_2,\ell_1)$. Moreover,
  if $\psi_{u0}\rvert_{L\times L}=\nu\times\nu$, then there exist invertible
  $\nu$-semilinear transformations $u_1$, $u_2\in\End_FV$ such that
  \[
  \psi_u(f) = \bigl(
  \begin{smallmatrix}
    {}^\rho u_1&0\\0&{}^\theta u_2
  \end{smallmatrix}
  \bigr) \circ f \circ \bigl(
  \begin{smallmatrix}
    {}^\rho u_1^{-1}&0\\0&{}^\theta u_2^{-1}
  \end{smallmatrix}
  \bigr) \quad\text{for all $f\in\End_L({}^\rho V\oplus {}^\theta
    V)$.}
  \]
  For any pair $(u_1,u_2)$ satisfying this condition, we have
  \[
  u_2(x*y)=u(x)*u_1(y) \text{ and } u_1(x*y)=
  \theta\nu(\mu)^{-1}\bigl(u_2(x)*u(y)\bigr)
  \text{ for all $x$, $y\in V$.}
  \]
\end{lemma}

\begin{proof}
  The maps $\alpha_*$ and $\beta_u$ are isomorphisms of graded
  $L$-algebras for the usual $(\mathbb{Z}/2\mathbb{Z})$-gradings of
  $C(V,Q)$ and $C({}^\nu V,{}^\nu Q)$, and for the ``checker-board''
  grading of $\End_L({}^\rho V\oplus{}^\theta V)$ defined by
  \[
  \End_L({}^\rho V\oplus{}^\theta V)_0 = \End_L({}^\rho V)\times
  \End_L({}^\theta V)
  \]
  and
  \[
  \End_L({}^\rho V\oplus{}^\theta V)_1 = 
  \begin{pmatrix}
    0&\Hom_L({}^\theta V,{}^\rho V)\\
    \Hom_L({}^\rho V,{}^\theta V)&0
  \end{pmatrix}.
  \]
  Therefore, $\psi_u$ also preserves the grading, and it restricts
  to an automorphism $\psi_{u0}$ of the degree~$0$
  component. Because the map $C({}^\nu\cdot)$ is $\nu$-semilinear,
  the map $\psi_u$ also is $\nu$-semilinear, hence its restriction
  to the center of the degree~$0$ component is either $\nu\times\nu$
  or $(\nu\times\nu)\circ\varepsilon$.

  Suppose $\psi_{u0}\rvert_{L\times L} = \nu\times\nu$. By
  Lemma~\ref{lem:a} (applied with ${}^\rho V\oplus{}^\theta V$ instead
  of $V$), there exists an invertible $\nu$-semilinear transformation
  $v\in\End_F({}^\rho V\oplus{}^\theta V)$ such that
  $\psi_u(f)=v\circ f\circ v^{-1}$ for 
  all $f\in\End_F({}^\rho V\oplus{}^\theta V)$. Since $\psi_{u0}$
  fixes $\bigl(
  \begin{smallmatrix}
    \Id_{{}^\rho V}&0\\0&0
  \end{smallmatrix}
  \bigr)$, the element $v$ centralizes $\bigl(
  \begin{smallmatrix}
    \Id_{{}^\rho V}&0\\0&0
  \end{smallmatrix}
  \bigr)$, hence $v=\bigl(
  \begin{smallmatrix}
    {}^\rho u_1&0\\0&{}^\theta u_2
  \end{smallmatrix}
  \bigr)$ for some invertible $u_1$, $u_2\in\End_FV$. The
  transformations $u_1$ and $u_2$ are $\nu$-semilinear because $v$ is
  $\nu$-semilinear. From the commutativity of \eqref{eq:defphiu} we
  have $v\circ \alpha_*(x)=\beta_u({}^\nu x)\circ v =
  \alpha_*\bigl(u(x)\bigr)\circ v$ for all $x\in V$. By the definition
  of $\alpha_*$, it follows that
  \[
  u_1(z*x)=\theta\nu^{-1}(\mu)\bigl(u_2(z)*u(x)\bigr) \text{ and }
  u_2(x*y) = u(x)* u_1(y) 
  \text{ for all $y$, $z\in V$.}
  \]
\end{proof}

\begin{lemma}
  \label{lem:new}
  Let $\nu\in\{\Id_L,\rho,\theta\}$. 
  For every $F$-linear automorphism $\varphi$ of $\End\Gamma$ such
  that $\varphi\rvert_L=\nu$, there exists an invertible
  transformation $u\in\End_FV$, uniquely determined up to a factor in
  $L^\times$, such that $\varphi(f)=u\circ f\circ u^{-1}$ for all
  $f\in \End_LV$. Every such $u$ is a $\nu$-semilinear isotopy
  $\Gamma\to\Gamma$. 
\end{lemma}

\begin{proof}
  The existence of $u$, its uniqueness up to a factor in $L^\times$,
  and its $\nu$-semilinearity, were established in
  Lemma~\ref{lem:a}. It only remains to show that $u$ is an isotopy.

  Since $\varphi$ is an automorphism of $\End\Gamma$, it commutes with
  $\sigma_Q$, hence Lemma~\ref{lem:a} yields $\mu\in L^\times$ such
  that $Q\bigl(u(x)\bigr)=\nu(\mu\cdot Q(x)\bigr)$ for all $x\in
  V$. We may therefore consider the maps $\beta_u$ and $\psi_u$ of
  Lemma~\ref{lem:phi}. Now, recall from \cite[(8.8)]{BoI} that $C_0(V,Q) =
  C(\End_LV,\sigma_Q)$ by identifying $x\cdot y$ for $x$, $y\in V$
  with the image in $C(\End_LV,\sigma_Q)$ of the linear transformation
  $x\otimes y$ defined by $z\mapsto x\cdot b_Q(y,z)$ for $z\in V$. We have
  \[
  \varphi(x\otimes y) = u\circ(x\otimes y)\circ u^{-1} \colon z\mapsto
  u\bigl(x\cdot b_Q(y,u^{-1}(z))\bigr)\qquad\text{for $x$, $y$, $z\in
    V$.}
  \]
  Since $u$ is $\nu$-semilinear and $Q\bigl(u(x)\bigr) =
  \nu\bigl(\mu\cdot Q(x)\bigr)$ for all $x\in V$, it follows that
  \[
  u\bigl(x\cdot b_Q(y,u^{-1}(z))\bigr) = u(x)\cdot
  \nu\bigl(b_Q(y,u^{-1}(z))\bigr) = u(x)\cdot \nu(\mu)^{-1}b_Q(u(y),z).
  \]
  Therefore, $\varphi(x\otimes y)=\nu(\mu)^{-1}u(x)\otimes u(y)$ for $x$, $y\in V$,
  hence the following diagram (where $\beta_u$ and $C({}^\nu\cdot)$
  are as in \eqref{eq:defphiu}) is commutative:
  \[
  \xymatrix{
  C_0(V,Q) \ar[r]^-{C({}^\nu\cdot)\rvert_{C_0(V,Q)}} \ar[d]_{C(\varphi)}
  & C_0({}^\nu V,{}^\nu Q) \ar[d]^{\beta_u\rvert_{C_0({}^\nu
      V,{}^\nu Q)}}\\
  C_0(V,Q) \ar[r]^-{\alpha_{*0}}&\End_L({}^\rho V) \times
  \End_L({}^\theta V)
  }
  \]
  On the other hand, the following diagram is commutative because
  $\varphi$ is an automorphism of $\End\Gamma$:
  \[
  \xymatrix{
  C_0(V,Q) \ar[r]^-{\alpha_{*0}} \ar[d]_{C(\varphi)} & \End_L({}^\rho V)
  \times \End_L({}^\theta V) \ar[d]^{{}^\rho\varphi\times{}^\theta\varphi}\\
  C_0(V,Q) \ar[r]^-{\alpha_{*0}}&\End_L({}^\rho V)\times\End_L({}^\theta V)
  }
  \]
  Therefore, $\beta_u\rvert_{C_0({}^\nu V,{}^\nu Q)}\circ
  C({}^\nu\cdot)\rvert_{C_0(V,Q)} =
  ({}^\rho\varphi\times{}^\theta\varphi)\circ \alpha_{*0}$. 
  By comparing with \eqref{eq:defphiu}, we see that
  $\psi_{u0}={}^\rho\varphi\times{}^\theta\varphi$, hence
  $\psi_{u0}\rvert_{L\times L} = \nu\times\nu$. Lemma~\ref{lem:phi}
  then yields $\nu$-semilinear transformations $u_1$, $u_2\in\End_FV$
  such that
  \[
  \psi_u(f) = \bigl(
  \begin{smallmatrix}
    {}^\rho u_1&0\\0&{}^\theta u_2
  \end{smallmatrix}
  \bigr)
  \circ f \circ \bigl(
  \begin{smallmatrix}
    {}^\rho u_1^{-1}&0\\0&{}^\theta u_2^{-1}
  \end{smallmatrix}
  \bigr)
  \qquad\text{for all $f\in\End_L({}^\rho V\oplus{}^\theta V)$,}
  \]
  hence $\psi_{u0}=\Int({}^\rho u_1)\times \Int({}^\theta u_2)$. But
  we have $\psi_{u0}={}^\rho\varphi\times{}^\theta\varphi =
  \Int({}^\rho u)\times \Int({}^\theta u)$. Therefore, multiplying
  $(u_1,u_2)$ by a scalar in $L^\times$, we may assume $u=u_1$ and
  $u_2=\zeta u$ for some $\zeta\in L^\times$. Lemma~\ref{lem:phi} then
  gives 
  \[
  \zeta u(x*y) = u(x)*u(y) \text{ and }
  u(x*y)=\theta\nu(\mu)^{-1}\bigl((\zeta u(x)) *u(y)\bigr)
  \text{ for all $x$, $y\in V$.} 
  \]
  The second equation implies that
  $u(x*y)=\rho(\zeta)\theta\nu(\mu)^{-1}\bigl(u(x)*u(y)\bigr)$. By
  comparing with the 
  first equation, we get $\rho(\zeta)\theta\nu(\mu)^{-1}=\zeta^{-1}$, hence
  $\nu(\mu)=\rho(\zeta)\theta(\zeta)$. Therefore, $(\nu,u)$ is an
  isotopy $\Gamma\to\Gamma$ with multiplier $\nu^{-1}(\zeta)$.
\end{proof}

  We start with the proof of claim (ii) of Theorem~\ref{th:main}.
  Suppose $\tau$ is an $F$-automorphism of $\End\Gamma$ such that
  $\tau\rvert_L=\rho$ and $\tau^3=\Id$. By Lemma~\ref{lem:new}, we may
  find an invertible $\rho$-semilinear transformation $t\in\End_FV$
  such that $\tau(f)=t\circ f\circ t^{-1}$ for all $f\in\End_LV$, and
  every such $t$ is an isotopy of
  $\Gamma$. Since $\tau^3=\Id$, it follows that $t^3$ lies in the 
  centralizer of $\End_LV$ in $\End_FV$, which is $L$. Let $t^3=\xi\in
  L^\times$. We have $\rho(\xi)=t\xi t^{-1}=\nu$, hence
  $\xi\in F^\times$. The $F$-subalgebra of $\End_FV$ generated by
  $L$ and $t$ is a crossed product $(L,\rho,\xi)$; its centralizer
  is the $F$-subalgebra $(\End_LV)^\tau$ fixed under $\tau$, and we
  have
  \[
  \End_FV\cong (L,\rho,\xi)\otimes_F(\End_LV)^\tau.
  \]
  Now, $\deg(L,\rho,\xi)=3$ and $\deg(\End_LV)^\tau=8$, hence
  $(L,\rho,\xi)$ is split. Therefore $\xi=N_{L/F}(\eta)$ for
  some $\eta\in L^\times$. Substituting $\eta^{-1}t$ for $t$, we get
  $t^3=\Id_V$, and $t$ is still a $\rho$-linear isotopy of $\Gamma$. Let
  $\mu\in L^\times$ be the corresponding multiplier, so that for all
  $x$, $y\in V$
  \begin{equation}
  \label{eq:Qt}
  Q\bigl(t(x)\bigr) = \rho\bigl(\rho(\mu)\theta(\mu)Q(x)\bigr)
  \quad\text{and}\quad t(x)*t(y) = \rho(\mu)t(x*y).
  \end{equation}
  From the second equation we deduce that $t^3(x)*t^3(y)=N_{L/F}(\mu) t^3(x*y)$
  for all $x$, $y\in V$, hence $N_{L/F}(\mu)=1$ because
  $t^3=\Id_V$. By Hilbert's Theorem~90, we may find $\zeta\in
  L^\times$ such that $\mu=\zeta\theta(\zeta)^{-1}$. Define 
  $Q'=\rho(\zeta)\theta(\zeta) Q$ and let
  $x*'y= \zeta(x*y)$ for $x$, $y\in V$. Then $\Id_V$ is an
  isotopy $\Gamma\to\Gamma'=(V,L,Q',\rho,*')$ with multiplier $\zeta$,
  and \eqref{eq:Qt} implies that
  \[
  Q'\bigl(t(x)\bigr) = \rho\bigl(Q'(x)\bigr) \quad\text{and}\quad
  t(x)*'t(y)=t(x*'y)\quad\text{for all $x$, $y\in V$.}
  \]
  Now, observe that because $t$ is $\rho$-semilinear and $t^3=\Id_V$,
  the Galois group of $L/F$ acts by semilinear automorphisms on $V$,
  hence we have a Galois descent (see \cite[(18.1)]{BoI}): the fixed
  point set $S=\{x\in V\mid t(x)=x\}$ is an $F$-vector space such that
  $V=S\otimes_FL$. Moreover, since $Q'\bigl(t(x)\bigr) =
  \rho\bigl(Q'(x)\bigr)$ for all $x\in V$, the restriction of $Q'$ to
  $S$ is a quadratic form $n\colon S\to F$, and we have $Q'=n_L$. Also,
  because $t(x*'y)=t(x)*'t(y)$ for all $x$, $y\in V$, the product~$*'$
  restricts to a product $\star$ on $S$, and $\Sigma=(S,n,\star)$ is a
  symmetric composition because $\Gamma'$ is a cyclic
  composition. The canonical map $f\colon S\otimes_FL\to V$ yields an
  isomorphism of cyclic compositions $f\colon
  \Sigma\otimes(L,\rho)\iso \Gamma'$, hence also an isotopy
  $f\colon \Sigma\otimes(L,\rho)\to \Gamma$. We have $t=f\circ
  \widehat\rho\circ f^{-1}$, hence $\tau$ is
  conjugation by $f\circ\widehat\rho\circ f^{-1}$. 
\end{proof}

\begin{theorem}
  \label{th:conjclass}
  The assignment $\Sigma\mapsto\tau_{(\Sigma,f)}$ induces a bijection
  between the isomorphism classes of symmetric compositions $\Sigma$ for
  which there exists an $L$-linear isotopy $f\colon
  \Sigma\otimes(L,\rho)\to \Gamma$ and conjugacy classes in
  $\Aut_F(\End\Gamma)$ of automorphisms $\tau$ of
  $\End\Gamma$ such that $\tau^3=\Id$ and $\tau\rvert_L=\rho$.
\end{theorem}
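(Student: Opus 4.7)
Well-definedness on isomorphism classes and surjectivity of the map $\Sigma\mapsto\tau_{(\Sigma,f)}$ are given respectively by claims~(i) and~(ii) of Theorem~\ref{th:main}, so the content of Theorem~\ref{th:conjclass} is \emph{injectivity}. The plan: given $\varphi\in\Aut_F(\End\Gamma)$ conjugating $\tau_{(\Sigma_1,f_1)}$ to $\tau_{(\Sigma_2,f_2)}$, I will produce an $L$-linear isotopy $w\colon\Gamma\to\Gamma$ with $wt_1w^{-1}=t_2$ on the nose, where $t_i=f_i\circ\widehat\rho\circ f_i^{-1}$; then the restriction of $w$ to the fixed subspace $V^{t_1}$ will be the desired isomorphism $\Sigma_1\iso\Sigma_2$.

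To construct $w$, I first apply Lemma~\ref{lem:new} to write $\varphi=\Int(u)$ for some $\nu$-semilinear isotopy $u$ of $\Gamma$, with $\nu=\varphi\rvert_L\in\Gal(L/F)$. The conjugation relation, together with the uniqueness-up-to-$L^\times$ clause of Lemma~\ref{lem:new}, gives $ut_1u^{-1}=\lambda t_2$ in $\End_FV$ for some $\lambda\in L^\times$; cubing and using $t_i^3=\Id$ forces $N_{L/F}(\lambda)=1$. Next I normalize semilinearity by setting $v=t_2^ku$ with $k\in\{0,1,2\}$ chosen so that $\rho^k\nu=\Id_L$; using $t_2^k\lambda=\rho^k(\lambda)t_2^k$ gives an $L$-linear isotopy $v$ with $vt_1v^{-1}=\rho^k(\lambda)t_2=:\lambda't_2$, still with $N_{L/F}(\lambda')=1$. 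Hilbert~90 for the cyclic extension $L/F$ then furnishes $\eta\in L^\times$ with $\lambda'=\rho(\eta)/\eta$, and I set $w=\eta v$. Because the twisted bilinearity of $*$ makes scalar multiplication by $\eta$ an $L$-linear isotopy with multiplier $N_{L/F}(\eta)/\eta^2$, $w=\eta v$ is itself an $L$-linear isotopy with multiplier $m_w=N_{L/F}(\eta)m_v/\eta^2$, and
\[
wt_1w^{-1}=\eta\lambda't_2\eta^{-1}=\eta\lambda'\rho(\eta)^{-1}t_2=t_2
\]
(using $\eta\cdot\rho(\eta)/\eta\cdot\rho(\eta)^{-1}=1$).

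For the descent to symmetric compositions, $wt_1=t_2w$ means that $w$ restricts to an $F$-linear bijection $V^{t_1}\to V^{t_2}$. Following the construction in the proof of Theorem~\ref{th:main}(ii), I choose $\zeta_1\in L^\times$ with $\mu_1=\zeta_1/\theta(\zeta_1)$, where $\mu_i$ is the multiplier of $t_i$, so that $\Sigma_1$ is presented as $(V^{t_1},\,\rho(\zeta_1)\theta(\zeta_1)Q\rvert,\,\zeta_1\cdot *\rvert)$. The standard multiplier computation for the conjugation $wt_1w^{-1}=t_2$ yields the identity $\mu_2=\mu_1\theta(m_w)/m_w$, and therefore $\zeta_2:=\zeta_1/m_w$ satisfies $\mu_2=\zeta_2/\theta(\zeta_2)$ and gives a parallel presentation of $\Sigma_2$. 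Feeding in the isotopy equations $Q(w(x))=\rho(m_w)\theta(m_w)Q(x)$ and $w(x)*w(y)=m_w\,w(x*y)$ then shows by direct substitution that $w\rvert_{V^{t_1}}$ preserves both the norm $n_i$ and the product $\star_i$ on the nose, hence is an isomorphism $\Sigma_1\iso\Sigma_2$.

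The main technical obstacle lies in the second paragraph: checking that after normalizing semilinearity with $t_2^k$ and absorbing the cocycle $\lambda'$ via Hilbert~90, the resulting $w=\eta v$ is still an $L$-linear isotopy and intertwines $t_1,t_2$ on the nose. The key (initially perhaps surprising) ingredient is that scalar multiplications in $L^\times$ \emph{are} isotopies of the cyclic composition, precisely because of the twisted bilinearity of $*$; this ensures both that $w$ is an honest isotopy and that its multiplier $m_w$ is related to $m_v$ and $\eta$ in exactly the way needed to make $\zeta_2=\zeta_1/m_w$ compatible with the $\zeta$-modifications of Theorem~\ref{th:main}(ii).
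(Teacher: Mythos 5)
Your proof is correct, and its skeleton matches the paper's: reduce to injectivity; use Lemma~\ref{lem:new} to write $\varphi=\Int(u)$; use the uniqueness clause to get $u\circ t_1\circ u^{-1}=\lambda t_2$ for some $\lambda\in L^\times$; cube to get $N_{L/F}(\lambda)=1$; apply Hilbert~90. Where you diverge is the endgame. The paper keeps the intertwiner $\nu$-semilinear, pulls it back through $f$ and $f'$ to a map $\Sigma\otimes(L,\rho)\to\Sigma'\otimes(L,\rho)$ commuting with $\widehat\rho$, restricts to the $\widehat\rho$-fixed subspace $\Sigma$ to get an isotopy of symmetric compositions, and finally rescales to an isomorphism. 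You instead first normalize the intertwiner to be $L$-linear by composing with $t_2^k$ (a clean move that exploits that $t_2$ is itself a $\rho$-semilinear isotopy, so the composite $v=t_2^ku$ is again an isotopy), arrange $wt_1w^{-1}=t_2$ exactly, and then work inside $V$ on the $t_i$-fixed subspaces with an explicit multiplier calculation $\mu_2=\mu_1\theta(m_w)/m_w$ and the adapted choice $\zeta_2=\zeta_1/m_w$, which lands you on an isomorphism on the nose without a final scaling step. Both routes work; yours trades the paper's commutative-square descent for a more hands-on computation, and as a byproduct you obtain the cleaner statement that $t_1$ and $t_2$ are conjugate by an $L$-linear isotopy.

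One step is stated a bit too quickly and deserves a sentence: you write that ``$\Sigma_1$ is presented as $(V^{t_1},\,\rho(\zeta_1)\theta(\zeta_1)Q\rvert,\,\zeta_1\cdot *\rvert)$,'' but a priori this descent composition built from $t_1$ is only \emph{isomorphic} to the $\Sigma_1$ you were handed, not equal to it. The identification does hold: if $m_1$ is the multiplier of the $L$-linear isotopy $f_1$, one checks that $\mu_1=\theta(m_1)/m_1$, so the choice $\zeta_1=m_1^{-1}$ is admissible and makes $f_1\rvert_{S_1}$ a literal isomorphism from $\Sigma_1$ onto the descent composition on $V^{t_1}$, while any other admissible $\zeta_1$ differs from this one by a factor in $F^\times$ and gives an isomorphic structure. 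This is easy but should be said, since appealing instead to the ``well-definedness'' of $\Sigma\mapsto\tau_{(\Sigma,f)}$ at this point would be circular.
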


\begin{proof}
  We already know by Theorem~\ref{th:main} that the map induced by
  $\Sigma\mapsto\tau_{(\Sigma,f)}$ is onto. Therefore, it suffices to
  show that if the automorphisms $\tau_{(\Sigma,f)}$ and
  $\tau_{(\Sigma',f')}$ associated to symmetric compositions $\Sigma$
  and $\Sigma'$ are conjugate, then $\Sigma$ and $\Sigma'$ are
  isomorphic. Assume $\tau_{(\Sigma',f')}=\varphi\circ
  \tau_{(\Sigma,f)}\circ \varphi^{-1}$ for some
  $\varphi\in\Aut_F(\End\Gamma)$, and let $t=f\circ\widehat\rho\circ
  f^{-1}$, $t'=f'\circ\widehat\rho\circ{f'}^{-1}\in\End\Gamma$ be the
  $\rho$-semilinear transformations such that
  $\tau_{(\Sigma,f)}=\Int(t)\rvert_{\End_LV}$ and
  $\tau_{(\Sigma',f')}=\Int(t')\rvert_{\End_LV}$. By
  Lemma~\ref{lem:new} we may find an isotopy
  $(\nu,u)\colon\Gamma\to\Gamma$ such that
  $\varphi=\Int(u)\rvert_{\End_LV}$. The equation $\tau_{(\Sigma',f')}
  = \varphi\circ\tau_{(\Sigma,f)}\circ\varphi^{-1}$ then yields
  $\Int(t')\rvert_{\End_LV} = \Int(u\circ t\circ
  u^{-1})\rvert_{\End_LV}$, hence there exists $\xi\in L^\times$ such
  that $u\circ t\circ u^{-1}=\xi t'$. Because $t^3={t'}^3=\Id_V$, we
  have $N_{L/F}(\xi)=1$, hence Hilbert's Theorem~90 yields $\eta\in
  L^\times$ such that $\xi=\rho(\eta)\eta^{-1}$. Then
  $\eta^{-1}u\colon \Gamma\to\Gamma$ is~a $\nu$-semilinear isotopy such that
  $(\eta^{-1}u)\circ t\circ (\eta^{-1}u)^{-1}=\xi t'$, and we have a
  commutative diagram
  \[
  \xymatrix{
  \Sigma\otimes(L,\rho) \ar[d]_{\widehat\rho}
  \ar[rr]^{{f'}^{-1}\circ(\eta^{-1}u)\circ f} && \Sigma'\otimes(L,\rho)
  \ar[d]^{\widehat\rho} \\
  \Sigma\otimes(L,\rho) \ar[rr]^{{f'}^{-1}\circ(\eta^{-1}u)\circ f}&&
  \Sigma'\otimes(L,\rho)
  }
  \]
  The restriction of ${f'}^{-1}\circ(\eta^{-1}u)\circ f$ to $\Sigma$
  is an isotopy of symmetric compositions $\Sigma\to \Sigma'$; a
  scalar multiple of this map is an isomorphism $\Sigma\iso\Sigma'$.
\end{proof}

\section{Trialitarian automorphisms of groups of type $\D$}
\label{sec:trigroups}

Let $F$ be a field of characteristic different from~$2$. By
\cite[(44.8)]{BoI}, for every adjoint simple group $G$ of type $\D$
over $F$ there is a trialitarian algebra $T=(E,L,\sigma,\alpha)$ such
that $G$ is isomorphic to $\gAut_L(T)$. Since the correspondence
between trialitarian algebras and adjoint simple groups of type $\D$ is
actually shown in \cite[(44.8)]{BoI} to be an equivalence of
groupoids, we have $\gAut(G)\cong\gAut_F(T)$ if $G=\gAut_L(T)$. We then
have a commutative diagram with exact rows:
\begin{equation}
\label{eq:Phi}
\begin{split}
\xymatrix{
1\ar[r]&\gAut_L(T) \ar[r] \ar@{=}[d] & \gAut_F(T) \ar[r] \ar[d]^{\Phi}
& \gAut_F(L)\ar@{=}[d] \ar[r]&1\\
1\ar[r]&G\ar[r] & \gAut(G)\ar[r]^{\pi} & (\Sym_3)_L \ar[r] & 1
}
\end{split}
\end{equation}
where $\Phi$ maps every $F$-automorphism $\tau$ of $T$ to
conjugation by $\tau$, and $(\Sym_3)_L$ is~a (non-constant) twisted
form of the symmetric group $\Sym_3$. Here $\gAut_F(L)$ is the group
scheme given by $\gAut_F(L)(R) = \Aut_{\text{$R$-alg}}(L\otimes_FR)$
for any commutative $F$-algebra $R$. Thus, the type of the group $G$
is related as follows to the type of $L$ and to $\gAut_F(L)$:
\begin{enumerate}
\item[(i)] type ${}^1\D$: $L\cong F\times F\times F$ and
  $\gAut_F(L)(F) \cong \Sym_3$;
\item[(ii)] type $^2\D$: $L\cong F\times \Delta$ (with $\Delta$ a
  quadratic field extension of $F$) and $\gAut_F(L)(F)\cong\Sym_2$;
\item[(iii)] type $^3\D$: $L$ a cyclic cubic field extension of $F$
  and $\gAut_F(L)(F)\cong\Z/3\Z$;
\item[(iv)] type $^6\D$: $L$ a non-cyclic cubic field extension of $F$
  and $\gAut_F(L)(F)=1$.
\end{enumerate}

\begin{theorem}
  \label{th:extrialauto}
  Let $G$ be an adjoint simple group of type $\D$ over $F$. If
  $\gAut(G)(F)$ contains an outer automorphism $\varphi$ such that
  $\varphi^3$ is inner, then~$G$ is of type $^1\D$ or $^3\D$, and in the
  trialitarian algebra $T=(E,L,\sigma,\alpha)$ such that
  $G\cong\gAut_L(T)$, the central simple $L$-algebra $E$ is split.
\end{theorem}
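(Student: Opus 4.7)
The plan is to transfer the hypothesis through the isomorphism $\Phi$ in diagram~\eqref{eq:Phi} and then extract a Brauer-class identity for $E$. Via $\Phi$, the outer automorphism $\varphi$ lifts to an $F$-automorphism $\tau\in\gAut_F(T)(F)$ whose image in $\gAut_F(L)(F)=\Aut_F(L)$ equals $\pi(\varphi)$; this image is non-trivial (since $\varphi$ is outer) and its cube is trivial (since $\varphi^3$ is inner). Hence $\Aut_F(L)$ contains an element of order~$3$, which by inspection of the four type-cases listed after~\eqref{eq:Phi} forces $L$ to be split or a cyclic cubic field extension. This gives the first conclusion; writing $\rho:=\tau\rvert_L$, we have that $\rho$ is an order-$3$ element of $\Aut_F(L)$.

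For the splitting of $E$, I plan to work in $\operatorname{Br}(L)$. The restriction of $\tau$ to $E$ is a $\rho$-semilinear $F$-algebra automorphism, which yields an $L$-algebra isomorphism of $E$ with its $\rho$-twist; in particular $[E]=\rho[E]$ in $\operatorname{Br}(L)$, and applying $\rho$ once more, $[E]=\theta[E]$ with $\theta=\rho^2$.

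The second ingredient is the triality relation for the Allen invariant: the Brauer classes of the three Tits algebras of $G$ sum to zero, which in the cubic setting reads $\operatorname{cor}_{L/F}[E]=0$ in $\operatorname{Br}(F)$, equivalently $[E]+\rho[E]+\theta[E]=0$ in $\operatorname{Br}(L)$. This is visible directly from the trialitarian isomorphism $\alpha\colon C(E,\sigma)\iso{}^\rho E\times{}^\theta E$ combined with the standard Brauer identity for Clifford algebras of degree-$8$ orthogonal involutions of trivial discriminant. Substituting $\rho[E]=\theta[E]=[E]$ yields $3[E]=0$, and combined with $2[E]=0$ (from orthogonality) we conclude $[E]=0$, so $E$ is split. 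The case $L\cong F\times F\times F$ is handled by the same manipulation applied to the three-factor decomposition of $E$.

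The step I expect to require most care is pinning down the triality/Brauer identity invoked in the third paragraph; several near-equivalent formulations exist in~\cite{BoI} (fundamental Clifford-algebra relations for degree~$8$, the corestriction relation on Tits algebras, classical $\D$-triality), and one must be selected and quoted cleanly. Once that is in hand the Brauer-class computation above is immediate.
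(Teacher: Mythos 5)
Your argument is correct but follows a genuinely different route from the paper's on both halves, and both differences are worth noting.

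For the type restriction, you argue directly from the existence of an order-$3$ element in $\Aut_F(L)$. This disposes of types $^2\D$ and $^6\D$ uniformly. The paper handles $^2\D$ the same way, but for $^6\D$ it instead extends scalars from $F$ to $L$ to produce a group of type $^2\D$ over $L$ still carrying the forbidden automorphism, and argues by contradiction. Your direct argument is cleaner here; the paper's detour through scalar extension is not needed for this step (though the scalar-extension technique is used repeatedly elsewhere in the paper, which may explain the authors' preference).

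For the splitting of $E$, you deduce $[E]=\rho_*[E]=\theta_*[E]$ from the $\rho$-semilinear lift $\tau$ and combine with $\operatorname{cor}_{L/F}[E]=0$ and $2[E]=0$ to get $[E]=0$. The paper instead reduces the $^3\D$ case to the $^1\D$ case by scalar extension, cites \cite[Example~15]{Garibaldi:2012} or \cite[Theorem~13.1]{CEKT:2013} for the $^1\D$ splitting, and then uses the $3$-torsion/$2$-torsion argument. Your approach is more self-contained (it does not invoke the $^1\D$ result as a black box) but it does rest on the identity $\operatorname{cor}_{L/F}[E]=0$, which you correctly flag as the step needing care. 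That identity is indeed the right one: it is the corestriction formulation of the constraint, for $\D_4$, that the three Tits algebras (vector and two half-spin) have Brauer classes summing to zero, which follows from the Tits map $Z^*\to\operatorname{Br}$ being a homomorphism together with $\chi_v+\chi_++\chi_-=0$ in $Z^*\cong(\Z/2\Z)^2$; via the trialitarian isomorphism $\alpha\colon C(E,\sigma)\iso{}^\rho E\times{}^\theta E$, this reads $[E]+[{}^\rho E]+[{}^\theta E]=0$ in $\operatorname{Br}(L)$, and since $[E]$ is $2$-torsion and $[L:F]=3$ is odd, the restriction map is injective on the relevant subgroup, so this is equivalent to $\operatorname{cor}_{L/F}[E]=0$. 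With that identity pinned down (and cited from \cite[\S 9]{BoI}), your Brauer-group computation is correct, including the split-$L$ case via the same manipulation componentwise.

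One small point of presentation: when you write $[E]=\rho[E]$, it is worth being explicit that the $\rho$-semilinear automorphism $\tau$ of $E$ induces an $L$-algebra isomorphism $E\iso{}^\rho E$ and hence equality of Brauer classes under the natural $\Gal(L/F)$-action; the direction of twisting ($\rho$ vs.\ $\rho^{-1}$) is immaterial since $\langle\rho\rangle=\langle\theta\rangle$.
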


\begin{proof}
  Since the image $\pi(\varphi)\in(\Sym_3)_L(F)$ has order~$3$, it is
  clear from the characterization of the various types above that $G$
  cannot be of type $^2\D$. If~$G$ is of type $^6\D$, then after
  extending scalars from $F$ to $L$ we get as new cubic algebra
  $L\otimes_FL\cong L\times(\Delta\otimes_FL)$, where $\Delta$, the
  discriminant of $L$, is a quadratic field extension. Thus,
  the group $G_L$ has type $^2\D$; but the outer automorphism $\varphi$
  extends to an outer automorphism of $G_L$ such that $\varphi^3$ is
  inner, in contradiction to the preceding case. Therefore, the type
  of $G$ is $^1\D$ or $^3\D$. If $G$ is of type $^1\D$, then the
  algebra $E$ is split by \cite[Example~15]{Garibaldi:2012} or by
  \cite[Theorem~13.1]{CEKT:2013}. If $G$ is of type $^3\D$, then after scalar
  extension to $L$ the group $G_L$ has type $^1\D$, so $E\otimes_FL$
  is split. Therefore, the Brauer class of $E$ has $3$-torsion since
  it is split by a cubic extension. But it also has $2$-torsion since
  $E$ carries an orthogonal involution, hence $E$ is split.
\end{proof}

For the rest of this section, we focus on trialitarian automorphisms
(i.e., outer automorphisms of order~$3$) of groups of type $^3\D$. Let
$G$ be an adjoint simple group of type $^3\D$ over $F$, and let $L$ be
its associated cyclic cubic field extension of $F$. Thus,
\[
(\Sym_3)_L(F) = \Gal(L/F)\cong\Z/3\Z.
\]
If $G$ carries a trialitarian automorphism $\varphi$ defined over $F$,
then $\pi\colon\gAut(G)(F)\to\Gal(L/F)$ is a
split surjection, hence $\gAut(G)(F)\cong
G(F)\rtimes(\Z/3\Z)$. Therefore, it is easy to see that for any other
trialitarian automorphism $\varphi'$ of $G$ defined over $F$, the
elements $\varphi$ and $\varphi'$ are conjugate in $\gAut(G)(F)$ if
and only if there exists $g\in G(F)$ such that
$\varphi'=\Int(g)\circ\varphi\circ\Int(g)^{-1}$. When this occurs, we
have $\pi(\varphi)=\pi(\varphi')$.

\begin{theorem}
  \label{th:trialauto}
  \begin{enumerate}
  \item[(i)]
  Let $G$ be an adjoint simple group of type $^3\D$ over $F$. The
  group $G$ carries a trialitarian automorphism defined over $F$ if
  and only if the trialitarian algebra $T=(E,L,\sigma,\alpha)$ (unique
  up to isomorphism) such that $G\cong\gAut_L(T)$ has the form
  $T\cong\End\Gamma$ for some reduced cyclic composition
  $\Gamma$. 
  \item[(ii)]
  Let $G=\gAut_L(\End\Gamma)$ for some reduced cyclic
  composition $\Gamma$. Every trialitarian automorphism $\varphi$ of
  $G$ has the form $\varphi=\Int(\tau)$ for some uniquely determined
  $F$-automorphism $\tau$ of $\End\Gamma$ such that $\tau^3=\Id$ and
  $\tau\rvert_L=\pi(\varphi)$. For a given nontrivial
  $\rho\in\Gal(L/F)$, the assignment $\Sigma\mapsto
  \Int(\tau_{(\Sigma,f)})$ defines a bijection between the isomorphism
  classes of symmetric compositions for which there exists an
  $L$-linear isotopy $f\colon\Sigma\otimes(L,\rho)\to\Gamma$ and
  conjugacy classes in $\gAut(G)(F)$ of trialitarian automorphisms
  $\varphi$ of $G$ such that $\pi(\varphi)=\rho$.
  \end{enumerate}
\end{theorem}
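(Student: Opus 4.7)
The plan is to transfer the entire statement from the level of the adjoint group $G$ down to the level of the trialitarian algebra $T$ by means of diagram~\eqref{eq:Phi}, where Theorems~\ref{th:main} and~\ref{th:conjclass} already do the real work. The first step is to observe that the middle vertical map $\Phi$ in~\eqref{eq:Phi} is an isomorphism of group schemes: this is the equivalence of groupoids \cite[(44.8)]{BoI} invoked at the start of \S\ref{sec:trigroups}, or equivalently a five-lemma argument using that the outer vertical arrows of~\eqref{eq:Phi} are identities. On $F$-points we then have a bijection $\Phi\colon\Aut_F(T)\iso\gAut(G)(F)$, $\tau\mapsto\Int(\tau)$, with $\pi(\Phi(\tau))=\tau\rvert_L$; in particular $\Phi$ preserves orders of elements and carries conjugacy classes to conjugacy classes.

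For claim~(i), I would argue the forward direction as follows. If $G$ carries a trialitarian automorphism $\varphi$, then $\varphi^3=\Id$ is in particular inner, so Theorem~\ref{th:extrialauto} gives that $E$ is split; the results recalled at the start of \S\ref{sec:trialalg} then yield $T\cong\End\Gamma$ for some cyclic composition $\Gamma=(V,L,Q,\rho,*)$. The preimage $\tau=\Phi^{-1}(\varphi)\in\Aut_F(\End\Gamma)$ satisfies $\tau^3=\Id$ and $\tau\rvert_L=\pi(\varphi)\neq\Id$, so Theorem~\ref{th:main}(ii) produces a symmetric composition $\Sigma$ and an $L$-linear isotopy $f\colon\Sigma\otimes(L,\rho)\to\Gamma$, proving $\Gamma$ is reduced. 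Conversely, if $T\cong\End\Gamma$ with $\Gamma$ reduced, I would pick any $\Sigma$ and $f$ witnessing this, apply Theorem~\ref{th:main}(i) to obtain $\tau_{(\Sigma,f)}$ of order~$3$ with $\tau_{(\Sigma,f)}\rvert_L=\rho$, and conclude that $\Phi(\tau_{(\Sigma,f)})$ is an order-$3$ element of $\gAut(G)(F)$ mapping to the nontrivial element $\rho$ of $\Gal(L/F)$, hence a trialitarian automorphism of $G$.

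For claim~(ii), the existence and uniqueness of $\tau$ are immediate from the isomorphism $\Phi$: since $\varphi$ has order~$3$, so does $\tau=\Phi^{-1}(\varphi)$, and $\tau\rvert_L=\pi(\varphi)$ by the commutativity of~\eqref{eq:Phi}. For the bijection, I would then fix a nontrivial $\rho\in\Gal(L/F)$ and observe that $\Phi$ restricts to a conjugation-equivariant bijection between the order-$3$ automorphisms of $\End\Gamma$ restricting to $\rho$ on $L$ and the trialitarian automorphisms of $G$ with $\pi(\varphi)=\rho$. Composing with the bijection of Theorem~\ref{th:conjclass} yields the asserted correspondence $\Sigma\mapsto\Int(\tau_{(\Sigma,f)})$.

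No serious obstacle is expected: once the formal properties of $\Phi$ are in hand, both parts are a direct translation via Theorems~\ref{th:main} and~\ref{th:conjclass}. The only step requiring a moment's thought is ensuring that $\Phi$ transfers conjugacy \emph{in $\gAut(G)(F)$} (not merely in $G(F)$) back to conjugacy in $\Aut_F(\End\Gamma)$, which is automatic from $\Phi$ being an isomorphism of the ambient groups.
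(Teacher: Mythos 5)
Your proposal is correct and follows essentially the same route as the paper's proof: both use the isomorphism $\Phi$ from diagram~\eqref{eq:Phi} to transfer the question to $\Aut_F(\End\Gamma)$ and then invoke Theorems~\ref{th:extrialauto}, \ref{th:main}, and~\ref{th:conjclass}. The only detail you leave implicit is that if $\pi(\varphi)=\rho^2$ one should first replace $\varphi$ by $\varphi^2$, since Theorem~\ref{th:main}(ii) is stated for automorphisms restricting to the specific $\rho$ appearing in the data of $\Gamma$; the paper makes this substitution explicit.
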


\begin{proof}
  Suppose first that $\varphi$ is a trialitarian automorphism of $G$,
  and let $G=\gAut_L(T)$ for some trialitarian algebra
  $T=(E,L,\sigma,\alpha)$. Theorem~\ref{th:extrialauto} shows that the
  central simple $L$-algebra $E$ is split, hence $T=\End\Gamma$ for
  some cyclic composition $\Gamma=(V,L,Q,\rho,*)$ over
  $F$. Substituting $\varphi^2$ for $\varphi$ if necessary, we may
  assume $\pi(\varphi)=\rho$. The preimage of $\varphi$ under the
  isomorphism $\Phi_F\colon \gAut_F(T)(F) \iso \gAut(G)(F)$ (from
  \eqref{eq:Phi}) is an $F$-automorphism $\tau$ of $T$ such that
  $\varphi=\Int(\tau)$, $\tau^3=\Id$, and $\tau\rvert_L=\rho$. Since
  $\Phi_F$ is a bijection, $\tau$ is uniquely determined by
  $\varphi$. By
  Theorem~\ref{th:main}(ii), the existence of $\tau$ implies that the
  cyclic composition $\Gamma$ is reduced.

  Conversely, if $\Gamma$ is reduced, then by
  Theorem~\ref{th:main}(i),  the trialitarian algebra $\End\Gamma$
  carries automorphisms $\tau$ such that $\tau^3=\Id$ and
  $\tau\rvert_L\neq\Id_L$. For any such~$\tau$, conjugation by $\tau$
  is a trialitarian automorphism of $G$.

  The last statement in (ii) readily follows from
  Theorem~\ref{th:conjclass} because trialitarian automorphisms
  $\Int(\tau)$, $\Int(\tau')$ are conjugate in $\gAut(G)(F)$ if and
  only if $\tau$, $\tau'$ are conjugate in $\Aut_F(\End\Gamma)$.
\end{proof}

The following proposition shows that the algebraic subgroup of fixed
points under a trialitarian automorphism of the form
$\Int(\tau_{(\Sigma,f)})$ is isomorphic to $\gAut(\Sigma)$, hence in
characteristic different from~$2$ and $3$ it
is a simple adjoint group of type $\mathsf{G}_2$ or $\mathsf{A}_2$, in
view of the classification of symmetric compositions (see
\cite[\S9]{CKT:2012}).

\begin{proposition}
  \label{prop:fixedpoints}
  Let $G=\gAut_L\bigl(\End(\Sigma\otimes(L,\rho))\bigr)$ for some
  symmetric composition $\Sigma=(S,n,\star)$ over $F$ and some cyclic
  cubic field extension $L/F$ with nontrivial automorphism $\rho$. The
  subgroup of $G$ fixed under the trialitarian automorphism
  $\Int(\widehat\rho)$ is canonically isomorphic to $\gAut(\Sigma)$.
\end{proposition}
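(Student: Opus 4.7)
The plan is to construct a canonical morphism of $F$-group schemes
\[
\Psi\colon \gAut(\Sigma)\to G, \qquad g\mapsto \Int(g\otimes\Id_L),
\]
and show it factors through an isomorphism $\gAut(\Sigma)\iso G^{\Int(\widehat\rho)}$. For every commutative $F$-algebra $R$ and every $g\in\Aut(\Sigma_R)$, the $L$-linear extension $g\otimes\Id_L$ is an automorphism of the base-changed cyclic composition $\Gamma_R=\Sigma_R\otimes(L,\rho)$ that commutes with $\widehat\rho=\Id_S\otimes\rho$, so $\Psi(g)\in G(R)$ is fixed by $\Int(\widehat\rho)$. Injectivity on $R$-points is immediate: if $\Int(g\otimes\Id_L)=\Id$, then $g\otimes\Id_L$ is a scalar in $(L\otimes R)^\times$, and restricting to $S\otimes R$ together with the conditions defining $\Aut(\Sigma_R)$ forces the scalar to be $1$.

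For surjectivity the plan is to reverse the construction. Given $\varphi\in G^{\Int(\widehat\rho)}(R)$, Lemma~\ref{lem:new} applied with $\nu=\Id_L$ produces an $L\otimes R$-linear isotopy $u$ of $\Gamma_R$ with $\varphi=\Int(u)$, unique up to a scalar in $(L\otimes R)^\times$. The commutation $\Int(\widehat\rho)\circ\varphi=\varphi\circ\Int(\widehat\rho)$ becomes $\widehat\rho\circ u\circ\widehat\rho^{-1}=\lambda u$ for some $\lambda\in (L\otimes R)^\times$; iterating and using $\widehat\rho^3=\Id$ yields $N_{L/F}(\lambda)=1$. Hilbert's Theorem~90 for the cyclic cubic extension $L/F$ then gives $\eta$ with $\lambda=\eta/\rho(\eta)$, and after replacing $u$ by $\eta u$ we may assume $\widehat\rho\circ u=u\circ\widehat\rho$. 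If $\mu\in (L\otimes R)^\times$ denotes the multiplier of $u$, applying $\widehat\rho$ to the identity $Q(u(x))=\rho(\mu)\theta(\mu)Q(x)$ and using $Q\circ\widehat\rho=\rho\circ Q$ forces $\mu=\theta(\mu)$, so $\mu\in R^\times$. By Galois descent $(V\otimes R)^{\widehat\rho}=S\otimes R$ and $u$ preserves this subspace; the rescaled restriction $h=\mu^{-1}u\rvert_{S\otimes R}$ is then an automorphism of $\Sigma_R$ satisfying $\Psi(h)=\varphi$, since the central scalar $\mu^{-1}$ does not affect $\Int(u)$. A routine verification shows that these two constructions are mutually inverse and functorial in $R$.

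The main obstacle is the verification that the multiplier $\mu$ of $u$ lies in $R^\times$, not merely in $(L\otimes R)^\times$, once $u$ has been normalized to commute with $\widehat\rho$: without this, $u\rvert_{S\otimes R}$ would fail to be an isotopy of the $F$-defined symmetric composition $\Sigma_R$ and the inverse construction would not land in $\gAut(\Sigma)$. The preceding Hilbert~90 normalization of $u$ is the other delicate step, but it parallels exactly the corresponding manoeuvre at the end of the proof of Theorem~\ref{th:main}(ii); for the group-scheme statement it is safely handled by reducing to the equality of two smooth $F$-subgroups of $G$ and checking the isomorphism after base change to an algebraic closure.
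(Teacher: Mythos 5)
Your approach is genuinely different from the paper's. The paper introduces an intermediate group scheme $\gAut(\End\Sigma)$ over $F$, defined purely in terms of $(S,n,\star)$ with no reference to $L$: it is cut out of $\gPGL(S)$ by compatibility with $\alpha_{\star0}$. The Lemma~\ref{lem:new}-type argument is then run entirely over $F$ to get $\gAut(\Sigma)\iso\gAut(\End\Sigma)$, and the final identification with $G^{\Int(\widehat\rho)}$ is via the Weil-restriction isomorphism $\gPGL(S)\iso R_{L/F}\bigl(\gPGL(S\otimes_FL)\bigr)^{\Int(\widehat\rho)}$, which is a formal Galois-descent manipulation. You instead work directly with $L\otimes R$-linear isotopies of the induced cyclic composition and reverse the construction at the level of $R$-points.

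There is a genuine gap in your surjectivity step. You invoke Lemma~\ref{lem:new} to produce, for each $\varphi\in G^{\Int(\widehat\rho)}(R)$, an $L\otimes R$-linear isotopy $u$ with $\varphi=\Int(u)$. But Lemma~\ref{lem:new} (together with Lemmas~\ref{lem:a} and~\ref{lem:phi}) is proved only under the standing hypothesis that $L$ is a \emph{field}: the proof uses Skolem--Noether for the pair $\End_LV\subseteq\End_FV$ of $F$-algebras, and, crucially, the dichotomy that the value module of a nonzero $L$-bilinear form is all of $L$ (used to conclude $c-\mu b_Q=0$ in Lemma~\ref{lem:a}). Neither argument goes through when $L$ is replaced by $L\otimes_FR$ for general $R$. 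Your proposed remedy---reducing to two smooth subgroups and checking over an algebraic closure---does not close the gap, because over $\overline F$ the cubic algebra $L\otimes_F\overline F\cong\overline F^3$ is \emph{still not a field}, so Lemma~\ref{lem:new} is again inapplicable; you would need a separate split-case analogue of Lemmas~\ref{lem:a}--\ref{lem:new} in the setting of Remark~\ref{rem:triple}, which the paper has not established and which is not routine. The paper's detour through $\gAut(\End\Sigma)$ is precisely what avoids this: the analogue of Lemma~\ref{lem:new} it needs involves only the $F$-space $S$ and no cubic \'etale algebra at all, so the ``$L$ a field'' issue never arises, and the passage to $G^{\Int(\widehat\rho)}$ is handled by an essentially formal Weil restriction rather than by re-running the isotopy analysis over $L\otimes R$.

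The two parts you flag as delicate are in fact fine: the Hilbert~90 normalization to make $u$ commute with $\widehat\rho$ is correct, and so is the computation forcing the multiplier into $R^\times$ (you write $\mu=\theta(\mu)$ where the direct computation gives $\rho(\mu)=\mu$, but these are equivalent since $\theta=\rho^2$ and $\rho$ generates $\Gal(L/F)$). It is the seemingly innocuous invocation of Lemma~\ref{lem:new} that is the real difficulty.
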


\begin{proof}[Proof (Sketch)]
  Mimicking the construction of the map $\alpha_*$ in
  \eqref{eq:defalpha}, we may use the product $\star$ to construct an
  $F$-algebra isomorphism
  \[
  \alpha_\star\colon C(S,n) \iso \End_F(S\oplus S)
  \]
  such that $\alpha_\star(x)(y,z)=(z\star x,x\star y)$ for $x$, $y$,
  $z\in S$. This isomorphism restricts to an isomorphism
  \[
  \alpha_{\star0}\colon C_0(S,n)\iso (\End_F S)\times (\End_FS).
  \]
  Let $\gAut(\End\Sigma)$ be the group scheme whose rational points
  are the $F$-algebra automorphisms $\varphi$ of $(\End_FS, \sigma_n)$
  that make the following diagram commute:
  \[
  \xymatrix{
  C(\End_FS,\sigma_n) \ar[r]^-{\alpha_{\star0}} \ar[d]_{C(\varphi)} &
  (\End_FS)\times (\End_FS) \ar[d]^{\varphi\times\varphi} \\
  C(\End_FS,\sigma_n) \ar[r]^-{\alpha_{\star0}} & (\End_FS)\times
  (\End_FS)
  }
  \]
  Arguing as in Lemma~\ref{lem:new}, one proves that every such
  automorphism has the form $\Int(u)$ for some isotopy $u$ of
  $\Sigma$. But if $u$ is an isotopy of $\Sigma$ with multiplier
  $\mu$, then $\mu^{-1}u$ is an automorphism of $\Sigma$. Therefore,
  mapping every automorphism~$u$ of $\Sigma$ to~$\Int(u)$ yields an
  isomorphism $\gAut(\Sigma)\iso \gAut(\End\Sigma)$. The extension of
  scalars from $F$ to $L$ yields an isomorphism \[\gPGL(S)\iso
  R_{L/F}\bigl(\gPGL(S\otimes_FL)\bigr)^{\Int(\widehat\rho)}, \]which
  carries the subgroup $\gAut(\End\Sigma)$ to $G^{\Int(\widehat\rho)}$.
\end{proof}

To conclude, we briefly mention without proof the analogue of
Theorem~\ref{th:trialauto} for simply connected groups, which we could
have considered instead of adjoint groups. (Among simple algebraic
groups of type $\D$, only adjoint and simply connected groups may
admit trialitarian automorphisms.)   

\begin{theorem}
  \label{th:spin}
  \begin{enumerate}
  \item[(i)] 
  For any cyclic composition $\Gamma=(V,L,Q,\rho,*)$ over $F$,
  the group $\gAut_L(\Gamma)$ is simple simply connected of type
  $^3\D$, and there is an exact sequence of algebraic groups
  \[
  \xymatrix@1{
  1\ar[r]& \gmu_2^2 \ar[r]& \gAut_L(\Gamma) \ar[r]^-{\Int} &
  \gAut_L(\End\Gamma) \ar[r]&1.
  }
  \]
  \item[(ii)]
  A simple simply connected group of type $^3\D$ admits trialitarian
  automorphisms defined over $F$ if and only if it is isomorphic to
  the automorphism group of a reduced symmetric composition
  $\Gamma=(V,L,Q,\rho,*)$.  
  Conjugacy classes of trialitarian automorphisms of $\gAut_L(\Gamma)$
  defined over $F$ are in bijection with isomorphism classes of
  symmetric compositions $\Sigma$ for which there is an isotopy
  $\Sigma\otimes(L,\rho) \to\Gamma$.
 \end{enumerate}
\end{theorem}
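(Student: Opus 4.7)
\textbf{Part (i).} The plan is to identify $\gAut_L(\Gamma)$ with the simply connected cover of the adjoint group $\gAut_L(\End\Gamma)$, following the cyclic-composition description of Spin groups in \cite[\S36, \S44]{BoI}. After passing to a separable closure, $L$ splits and, via the decomposition in Remark~\ref{rem:triple}, $\Gamma$ becomes a triple of $8$-dimensional quadratic spaces linked by~$*$; the resulting $L$-linear automorphism group is then the classical $\mathrm{Spin}_8$ acting through its three inequivalent $8$-dimensional representations. The Galois twist is governed by $\rho$, so $\gAut_L(\Gamma)$ is simple simply connected of type $^3\D$, and $\Int$ is the central isogeny onto the adjoint quotient. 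To identify its kernel, I would adapt the centralizer argument of Lemma~\ref{lem:a}: an $L$-linear isomorphism of $\Gamma_R$ inducing the identity on $\End_L V$ over a commutative $F$-algebra $R$ must be multiplication by some $\lambda\in(L\otimes_F R)^\times$, and the compatibilities with $Q$ and $*$ impose $\lambda^2=1$ together with $\rho(\lambda)\rho^2(\lambda)=\lambda$. A direct functorial verification shows that these equations cut out exactly $\gmu_2^2$.

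\textbf{Part (ii).} For (ii), the plan is to extend the isomorphism $\gAut_F(T)\iso\gAut(G)$ from~\eqref{eq:Phi} to the simply connected setting. Since the equivalence of groupoids between adjoint simple groups of type $\D$ and trialitarian algebras passes to simply connected groups via the canonical central isogeny from (i), one obtains an analogous isomorphism $\gAut_F(T)\iso\gAut(\tilde G)$, where $T=\End\Gamma$ and $\tilde G=\gAut_L(\Gamma)$. Under this identification, trialitarian $F$-automorphisms of $\tilde G$ correspond bijectively and conjugacy-equivariantly to $F$-automorphisms $\tau$ of $T$ satisfying $\tau^3=\Id$ and $\tau|_L$ a generator of $\Gal(L/F)$. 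Theorem~\ref{th:main}(ii) then gives the existence criterion (namely that $\Gamma$ must be reduced, reading the statement's ``reduced symmetric composition $\Gamma=(V,L,Q,\rho,*)$'' as ``reduced cyclic composition''), and Theorem~\ref{th:conjclass} delivers the bijection between isomorphism classes of symmetric compositions $\Sigma$ for which there is an isotopy $\Sigma\otimes(L,\rho)\to\Gamma$ and conjugacy classes of such~$\tau$, which transfers across the isogeny to the desired conjugacy classes of trialitarian automorphisms of $\tilde G$.

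\textbf{Main obstacle.} The main technical point I expect is the extension of the equivalence of groupoids from adjoint to simply connected groups, i.e., upgrading $\gAut_F(T)\cong\gAut(G)$ to $\gAut_F(T)\cong\gAut(\tilde G)$, and showing that conjugacy classes are preserved on $F$-points. Morally this is immediate from the universal property of the central isogeny, but some care is needed because the map $\tilde G(F)\to G(F)$ need not be surjective: two trialitarian automorphisms of $\tilde G$ whose images are $G(F)$-conjugate must be shown to be $\tilde G(F)$-conjugate up to absorbing a central twist. I would handle this by a short cocycle argument showing that the obstruction in $H^1(F,\gmu_2^2)$ can be absorbed into the choice of lift of the underlying symmetric composition without altering its isomorphism class, thereby reducing the classification to the adjoint case already handled by Theorem~\ref{th:trialauto}.
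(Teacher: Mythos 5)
The paper states Theorem~\ref{th:spin} explicitly \emph{without proof} (``we briefly mention without proof the analogue of Theorem~\ref{th:trialauto} for simply connected groups''), so there is no authorial argument to compare against. Taking your sketch on its own merits, the overall plan is sound: reduce (i) to the classical identification with $\mathrm{Spin}_8$ after base change using Remark~\ref{rem:triple}, compute the kernel of $\Int$ by a double-centralizer argument, and reduce (ii) to Theorems~\ref{th:main} and~\ref{th:conjclass} via the equivalence of groupoids. You are also right that ``reduced symmetric composition $\Gamma=(V,L,Q,\rho,*)$'' in the statement is a slip and must be read as ``reduced cyclic composition.''

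Two remarks on the details. First, on the kernel in (i): the equations $\lambda^2=1$ and $\rho(\lambda)\rho^2(\lambda)=\lambda$ combine to $\lambda\in\mu_2(L\otimes R)$ with $N_{L/F}(\lambda)=1$, so the kernel is $\ker\bigl(N_{L/F}\colon R_{L/F}(\gmu_2)\to\gmu_2\bigr)$. Over $\overline{F}$ this is $\gmu_2^2$, but over $F$ (with $L$ a field) it is the nonsplit form of $\gmu_2^2$ twisted by the cyclic permutation action; the four $\overline F$-points are permuted so that only the identity is Galois-fixed. The theorem's notation $\gmu_2^2$ should therefore be read as ``a form of $\gmu_2^2$,'' and your phrase ``cut out exactly $\gmu_2^2$'' inherits the same imprecision. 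Second, the ``main obstacle'' you flag in (ii) is in fact not there: conjugacy of trialitarian automorphisms is taken in $\gAut(\widetilde G)(F)$, not in $\widetilde G(F)$, and the canonical identification $\gAut(\widetilde G)\cong\gAut(G)$ (both being $G\rtimes(\Sym_3)_L$, since inner automorphisms of $\widetilde G$ factor through the adjoint quotient and outer automorphisms lift uniquely along the central isogeny) transports conjugacy classes directly. No $H^1(F,\gmu_2^2)$-cocycle argument is needed; the classification in Theorem~\ref{th:trialauto}(ii) transfers verbatim.
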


\begin{corollary}
  Every simple adjoint or simply connected group of type $^3\D$ over a
  finite field admits trialitarian automorphisms.
\end{corollary}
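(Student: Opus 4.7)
The plan is to reduce the corollary, via Theorems~\ref{th:trialauto}(i) and~\ref{th:spin}(ii), to the statement that every $8$-dimensional cyclic composition over $\mathbb{F}_q$ with cyclic cubic algebra $(L,\rho)=(\mathbb{F}_{q^3},\mathrm{Frob})$ is reduced. Let $G$ be a simple adjoint or simply connected group of type $^3\D$ over $\mathbb{F}_q$. In the adjoint case, $G\cong\gAut_L(T)$ for some trialitarian algebra $T=(E,L,\sigma,\alpha)$; since the Brauer group of the finite field $L$ vanishes by Wedderburn's theorem, the $L$-algebra $E$ is split, so $T\cong\End\Gamma$ for some cyclic composition $\Gamma=(V,L,Q,\rho,*)$ over $\mathbb{F}_q$. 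In the simply connected case, the same cyclic composition $\Gamma$ arises from the adjoint quotient, and Theorem~\ref{th:spin}(i) identifies $\gAut_L(\Gamma)$ as the simply connected cover of $\gAut_L(\End\Gamma)$, hence $G\cong\gAut_L(\Gamma)$.

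It then remains to show that this $\Gamma$ is reduced. I would argue by Galois descent. Since the split octonion algebra exists over $\mathbb{F}_q$, its associated split para-Cayley composition $\widetilde{C}$ yields a reduced cyclic composition $\Gamma_0:=\widetilde{C}\otimes(L,\rho)$ over $\mathbb{F}_q$. Over the algebraic closure $\bar{\mathbb{F}}_q$ the cubic algebra splits and every Cayley algebra is split, and the explicit description of the scalar extension in Remark~\ref{rem:triple} identifies any $8$-dimensional cyclic composition with split cubic algebra over an algebraically closed field as the standard triality structure on three copies of the split octonions, unique up to isomorphism. Therefore $\Gamma_{\bar{\mathbb{F}}_q}\cong(\Gamma_0)_{\bar{\mathbb{F}}_q}$, so the $\mathbb{F}_q$-isomorphism classes of such forms of $\Gamma_0$ are classified by $H^1(\mathbb{F}_q,\gAut_L(\Gamma_0))$. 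By Theorem~\ref{th:spin}(i), $\gAut_L(\Gamma_0)$ is connected (indeed simply connected of type $^3\D$), so Lang's theorem gives $H^1(\mathbb{F}_q,\gAut_L(\Gamma_0))=1$. Hence $\Gamma\cong\Gamma_0$ is reduced, and Theorem~\ref{th:trialauto}(i) in the adjoint case or Theorem~\ref{th:spin}(ii) in the simply connected case produces a trialitarian automorphism of $G$ defined over $\mathbb{F}_q$.

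The main obstacle is the classification over $\bar{\mathbb{F}}_q$: one must carefully verify that every $8$-dimensional cyclic composition with split cubic algebra over an algebraically closed field is isomorphic to $\Gamma_0\otimes\bar{\mathbb{F}}_q$, by unpacking Remark~\ref{rem:triple} together with the uniqueness of the split octonion algebra and of nondegenerate $8$-dimensional quadratic forms over an algebraically closed field. Once this triality-type uniqueness is in hand, the remaining ingredients --- the vanishing of $\mathrm{Br}(\mathbb{F}_{q^3})$ and Lang's theorem for connected groups over finite fields --- are standard and combine immediately to yield the conclusion.
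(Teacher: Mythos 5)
Your proof is correct in outline, but it takes a genuinely different route from the paper. The paper's proof is a one-liner: it notes that the Allen invariant is trivial (equivalently, $E$ is split, because the Brauer group of a finite field vanishes) and cites Springer--Veldkamp \cite[\S4.8]{SpV} for the fact that cyclic compositions over finite fields are reduced. You replace the citation by a self-contained Galois-cohomology argument: after reducing (exactly as the paper does) to the statement that every cyclic composition $\Gamma$ over $\mathbb{F}_q$ with cubic algebra $(L,\rho)$ is reduced, you compare $\Gamma$ with the reduced model $\Gamma_0=\widetilde C\otimes(L,\rho)$, observe that both become isomorphic over $\overline{\mathbb{F}}_q$, so that $\Gamma$ is an $\mathbb{F}_q$-form of $\Gamma_0$ classified by $H^1(\mathbb{F}_q,\gAut_L(\Gamma_0))$, and then invoke Lang's theorem, using Theorem~\ref{th:spin}(i) to see that $\gAut_L(\Gamma_0)$ is connected. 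This buys a proof that is internal to the machinery of the present paper and does not require unpacking \cite{SpV}, which is pleasant. You are right that the one nontrivial ingredient you are implicitly assuming is the uniqueness of the $8$-dimensional cyclic composition with split cubic algebra over an algebraically closed field; this is true (it is essentially the uniqueness of the triality structure on the split octonions, and is also what makes $\gAut_L(\Gamma_0)$ a form of $\mathrm{Spin}_8$ in the first place), but it is not a consequence of Remark~\ref{rem:triple} alone, which only describes the scalar extension of a given $\Gamma$ and does not classify cyclic compositions with split cubic algebra. A cleaner way to fill that gap within the paper's framework is to argue via the cohomological invariants of Remark~\ref{rem:types}: since $\mathbb{F}_q$ has cohomological dimension $1$, both $f_3(\Gamma)$ and $g_3(\Gamma)$ vanish, so $\Gamma$ is reduced --- which is also, in substance, what the cited passage of \cite{SpV} does. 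Either way your argument closes correctly; just be aware that the uniqueness-over-$\overline{\mathbb{F}}_q$ step needs its own justification and is not a formal consequence of what is stated in the paper.
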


\begin{proof}
  The Allen invariant is trivial, and cyclic compositions are reduced,
  see \cite[\S4.8]{SpV}.
\end{proof}

\begin{examples}
\quad
(i) Let $F=\mathbb{F}_q$ be the field with $q$ elements,
    where $q$ is odd and $q\equiv1\bmod3$. As observed in
    Example~\ref{examples}(i), every symmetric composition over~$F$
    
    is 
    isomorphic either to the Okubo composition $\Sigma$ or to the
    split para-Cayley composition $\widetilde C$, and (up to
    isomorphism) there is a 
    unique cyclic composition $\Gamma\cong\widetilde
    C\otimes(L,\rho)\cong\Sigma\otimes(L,\rho)$ with cubic algebra
    $(L,\rho)$. Therefore, the simply connected group
    $\gAut_L(\Gamma)$ and the adjoint group $\gAut_L(\End\Gamma)$ have
    exactly two conjugacy classes of trialitarian automorphisms
    defined over $F$. See also \cite[(9.1)]{GorensteinLyons:83}.
\smallbreak\par\noindent
(ii) Example~\ref{examples}(ii) describes a cyclic
    composition induced by a unique (up to isomorphism) symmetric
    composition. Its automorphism group is a group of type $^3\D$
    admitting a unique conjugacy class of trialitarian automorphisms.
\smallbreak\par\noindent
(iii) In contrast to (i) and (ii) we get from
    Example~\ref{examples}(iii) examples of groups of type $^3\D$ with
    many conjugacy classes of trialitarian automorphisms.
\end{examples}

\end{document}